\newtheorem{Theorem}{Theorem}[section]
\newtheorem{Proposition}[Theorem]{Proposition}
\newtheorem{Lemma}[Theorem]{Lemma}
\newtheorem{Corollary}[Theorem]{Corollary}
\newtheorem{Problem}[Theorem]{Problem}
\newtheorem{Definition-Proposition}[Theorem]{Definition-Theorem}
\newtheorem{Main Conjecture}[Theorem]{Main Conjecture}
\theoremstyle{remark}
\newtheorem{Example}[Theorem]{Example}
\newtheorem{prop}{Proposition}[section]
\newtheorem{Remark}[prop]{Remark}
\newcommand{\id}{\mathrm{id}}
\newcommand\Schub{{\mathfrak S}}
\newcommand\Groth{{\mathfrak G}}
\newcommand{\xx}{\mathbf x}
\newcommand{\yy}{\mathbf y}
\newcommand{\caI}{\mathcal{I}}
\newcommand{\caL}{\mathcal{L}}
\newcommand{\caN}{\mathcal{N}}
\newcommand{\caO}{\mathcal{O}}
\newcommand{\caY}{\mathcal{Y}}
\newcommand{\frX}{\mathfrak{X}}
\newcommand{\field}{\mathbb}
\newcommand{\ga}{\alpha}
\newcommand{\C}{{\field C}}
\newcommand{\Z}{{\field Z}}
\theoremstyle{plain}
\newcommand{\excise}[1]{}
\newcommand{\cellsize}{11}
\newlength{\cellsz} \setlength{\cellsz}{\cellsize\unitlength}
\newsavebox{\cell}
\sbox{\cell}{\begin{picture}(\cellsize,\cellsize)
\put(0,0){\line(1,0){\cellsize}}
\put(0,0){\line(0,1){\cellsize}}
\put(\cellsize,0){\line(0,1){\cellsize}}
\put(0,\cellsize){\line(1,0){\cellsize}}
\end{picture}}
\newcommand\cellify[1]{\def\thearg{#1}\def\nothing{}%
\ifx\thearg\nothing
\vrule width0pt height\cellsz depth0pt\else
\hbox to 0pt{\usebox{\cell} \hss}\fi%
\vbox to \cellsz{
\vss
\hbox to \cellsz{\hss$#1$\hss}
\vss}}
\newcommand\tableau[1]{\vtop{\let\\\cr
\baselineskip -16000pt \lineskiplimit 16000pt \lineskip 0pt
\ialign{&\cellify{##}\cr#1\crcr}}}
\newcommand{\kellsize}{24}
\newlength{\kellsz} \setlength{\kellsz}{\kellsize\unitlength}
\newsavebox{\kell}
\sbox{\kell}{\begin{picture}(\kellsize,\kellsize)
\put(0,0){\line(1,0){\kellsize}}
\put(0,0){\line(0,1){\kellsize}}
\put(\kellsize,0){\line(0,1){\kellsize}}
\put(0,\kellsize){\line(1,0){\kellsize}}
\end{picture}}
\newcommand\kellify[1]{\def\thearg{#1}\def\nothing{}%
\ifx\thearg\nothing
\vrule width0pt height\kellsz depth0pt\else
\hbox to 0pt{\usebox{\kell} \hss}\fi%
\vbox to \kellsz{
\vss
\hbox to \kellsz{\hss$#1$\hss}
\vss}}
\newcommand\ktableau[1]{\vtop{\let\\\cr
\baselineskip -16000pt \lineskiplimit 16000pt \lineskip 0pt
\ialign{&\kellify{##}\cr#1\crcr}}}
\newcommand{\sellsize}{63}
\newlength{\sellsz} \setlength{\sellsz}{\sellsize\unitlength}
\newsavebox{\sell}
\sbox{\sell}{\begin{picture}(\sellsize,20)
\put(0,0){\line(1,0){\sellsize}}
\put(0,0){\line(0,1){\sellsize}}
\put(\sellsize,0){\line(0,1){\sellsize}}
\put(0,\sellsize){\line(1,0){\sellsize}}
\end{picture}}
\newcommand\sellify[1]{\def\thearg{#1}\def\nothing{}%
\ifx\thearg\nothing
\vrule width0pt height\sellsz depth0pt\else
\hbox to 0pt{\usebox{\sell} \hss}\fi%
\vbox to \sellsz{
\vss
\hbox to \sellsz{\hss$#1$\hss}
\vss}}
\newcommand\stableau[1]{\vtop{\let\\\cr
\baselineskip -16000pt \lineskiplimit 16000pt \lineskip 0pt
\ialign{&\sellify{##}\cr#1\crcr}}}
\begin{document}
\pagestyle{plain}
\mbox{}
\title{Polynomials for symmetric orbit closures in the flag variety}
\author{Benjamin J.~Wyser}
\author{Alexander Yong}
\address{Dept~of Mathematics, University of Illinois at
Urbana-Champaign, Urbana, IL 61801, USA}
\email{bwyser@uiuc.edu, ayong@uiuc.edu}
\date{September 30, 2014}
\maketitle

\begin{abstract}
In [Wyser-Yong '13] we introduced polynomial representatives of cohomology classes of orbit closures in the flag variety, for the symmetric pair $(GL_{p+q}, GL_p \times GL_q)$. We present analogous results for the remaining symmetric pairs of the form $(GL_n,K)$, i.e., 
$(GL_n,O_n)$ and $(GL_{2n},Sp_{2n})$. We establish the well-definedness of certain representatives from [Wyser '13]. It is also shown that the representatives
have the combinatorial properties of nonnegativity and stability. Moreover, we 
give some extensions to equivariant $K$-theory.
\end{abstract}
\tableofcontents
\section{Introduction}

\subsection{Overview}
This is a sequel to \cite{WyserYong}, where we introduced polynomial representatives of cohomology classes of $K=GL_p\times GL_q$-orbit closures in the flag variety $GL_{p+q}/B$. We 
present analogous definitions and results for
the remaining symmetric pairs $(GL_n,K)$, i.e.,
$(GL_{n},O_{n})$ and $(GL_{2n},Sp_{2n})$. 
Among our results, we establish the well-definedness, nonnegativity and ``stability'' of certain representatives from \cite{Wyser-13b}.

A subgroup $K$ of a reductive algebraic group $G$
is {\bf spherical} if its action on $G/B$ by left translations has finitely many orbits. Such a group is furthermore {\bf symmetric} if $K=G^{\theta}$ is the fixed point subgroup for a holomorphic involution $\theta$ of $G$. The symmetric 
pairs $(G,K)$ are classified in general. The geometry of symmetric orbit closures arises in the representation theory
of real forms $G_{\mathbb R}$ of complex semisimple (reductive) Lie groups $G$.
We refer the reader to \cite[Section~1]{WyserYong} and specifically the 
references therein for more background.

For a torus $S$, an $S$-stable subvariety $X\subset GL_n/B$ admits a class in both $H^{\star}(GL_n/B)$ and $S$-equivariant cohomology $H_S^{\star}(GL_n/B)$.  (For us, $S$ will be a maximal torus of $K$, and $X$ will be a $K$-orbit closure.)
Focusing on the ordinary case for simplicity, we have A.~Borel's isomorphism \cite{Borel}, i.e.,
\[H^{\star}(GL_n/B)\cong {\mathbb Z}[x_1,\ldots,x_n]/I^{S_n},\] 
where $I^{S_n}$
is the ideal generated by elementary symmetric polynomials of positive degree.

What is a polynomial representative for 
the coset associated to $[X]$ under Borel's isomorphism? This question is well-studied for
Schubert varieties, i.e., the $B$-orbit closures. Particularly nice representatives were discovered by A.~Lascoux-M.-P.~Sch\"{u}tzenberger \cite{Lascoux.Schutzenberger} in type $A$. 
In their work, \emph{Schubert polynomials} are defined starting with a choice of representative of the unique closed orbit 
(a point). In general, the polynomials
are obtained recursively using {\bf divided difference operators} defined by
$\partial_i(f):=\frac{f-s_i(f)}{x_i-x_{i+1}}$. Well-definedness 
of these polynomials follows from the fact these operators
define a representation of the symmetric group $S_n$.

For $K$-orbit closures, one can still use divided differences to obtain representatives for any orbit closure starting
with a representative for the class of each closed orbit.  
However, well-definedness is somewhat more subtle.  For instance, in the case of $(GL_{p+q},GL_p\times GL_q)$:
\begin{itemize}
\item[(i)] There are multiple closed orbits. 
\item[(ii)] Two saturated paths in the weak order joining the same two elements
are not necessarily labelled by reduced words of the same permutation. 
\end{itemize}
In \cite{WyserYong} we exhibit well-definedness of a choice of
representatives.  In contrast, for 
the cases studied in this paper, (i) is obviated since in each case $K$
does have a unique closed orbit.  However, (ii) remains true. Consequently, unlike the Schubert setting, the operators associated
to the two saturated paths may not be equal.

\subsection{Weak order, divided differences, and orbit parametrizations}\label{sec:weak-orders}
Let $G$ be a reductive group, and fix a maximal torus
and Borel subgroup $T \subseteq B$, respectively.  Let $\Delta=\Delta(G,T)$ denote the system of simple roots corresponding
to $B$.  Let $W=N_G(T)/T$ be the Weyl group, generated by simple reflections $\{s_{\ga} \mid \alpha \in \Delta\}$.  Let
$K$ be a spherical subgroup of $G$.
Given a $K$-orbit closure $Y$ and a simple reflection $s_{\ga}$, we define $s_{\ga} \cdot Y$ 
to be the $K$-orbit closure $\pi_{\ga}^{-1}(\pi_{\ga}(Y))$, where $\pi_{\ga}: G/B \rightarrow G/P_{\ga}$ is the
natural projection.  (Here, $P_{\ga} = B \cup Bs_{\ga}B$ is the standard minimal parabolic subgroup of type $\ga$.)
For $w \in W$, let $w = s_{\alpha_1} \hdots s_{\alpha_l}$ be a reduced expression for $w$, and define
$ w \cdot Y = s_{\alpha_1} \cdot (s_{\alpha_2} \cdot \hdots \cdot (s_{\alpha_l} \cdot Y) \hdots )$.
The resulting $K$-orbit closure is independent of the choice of reduced expression of $w$.  The \textbf{weak order} on
the set of $K$-orbit closures on $G/B$ is defined by $Y \leq Y'$ if and only if $Y' = w \cdot Y$ for some $w \in W$.

By \cite{Richardson-Springer}, $\pi_{\ga}|_Y$ is either birational or $2$-to-$1$ over its image.
Our weak order Hasse diagrams will use
a {\bf solid edge} labelled by $\ga$ to connect any $Y$ to $s_{\ga} \cdot Y \neq Y$ in the former case and a
{\bf dashed edge} in the latter case. (In general, multiple $\alpha$-labels can occur on an edge.) These are the conventions of \cite{Wyser-13b}, and correspond to the use of single and double edges, respectively, in \cite{Brion-01}. 

We need known parametrizations of the orbit sets for the two cases of this paper, as well as a concrete description 
of their weak orders. The reader may
refer to \cite{Wyser-13b} and the references therein, particularly \cite{Richardson-Springer}. First, for $(G,K)=(GL_n,O_n)$, the $K$-orbits are indexed by involutions $\pi \in S_n$ (i.e., $\pi^2 = \id$). Let $\caI(n)$ denote these involutions.
The {\bf Bruhat order} on $\caI(n)$, corresponding to containment of orbit closures, is the restriction of the inverse Bruhat
order on $S_n$.  Thus the unique closed orbit corresponds to the long element $w_0$, and the dense orbit to the
identity.  The weak order on $\caI(n)$ is generated by relations $\pi \prec s_i \cdot \pi \neq \pi$, where
$s_i$ is a simple transposition, and $s_i \cdot \pi$ is defined by:
\begin{enumerate}
	\item[(a)] If $\ell(s_i \pi) > \ell(\pi)$, then $s_i \cdot \pi = \pi$;
	\item[(b)] Otherwise, if $s_i \pi s_i \neq \pi$, then $s_i \cdot \pi = s_i \pi s_i$.  The edge connecting $\pi$
to $s_i \cdot \pi$ is solid.
	\item[(c)] Else, $s_i \cdot \pi = s_i \pi$.  The edge connecting $\pi$ to $s_i \cdot \pi$ is dashed.
\end{enumerate}

In case (b) above, if $\caY_{\pi}$ is the $K$-orbit closure corresponding to the
involution $\pi$, we have
$ [\caY_{s_i \cdot \pi}] = \partial_i [\caY_{\pi}]$,
both in ordinary and $S$-equivariant cohomology.  In case (c), we have
$[\caY_{s_i \cdot \pi}] = \frac{1}{2} \partial_i [\caY_{\pi}]$.
(We are mildly
abusing the notation $\partial_i$ and $\frac{1}{2}\partial_i$ to mean the cohomological push-pull operators
that correspond to the polynomial operators.) 

For $(G,K) = (GL_{2n},Sp_{2n})$, the $K$-orbits are indexed by fixed point-free involutions in $S_{2n}$;  
let $\caI_{\text{fpf}}(2n)$ denote the set of such involutions.
The
{\bf Bruhat order} on $\caI_{\text{fpf}}(2n)$ is also 
the restriction of the inverse Bruhat order on $S_{2n}$, with the unique closed orbit corresponding to $w_0$, and the dense orbit corresponding to $(1,2)(3,4) \hdots (2n-1,2n)$.  The weak order on $\caI_{\text{fpf}}(2n)$
is defined by
\begin{enumerate}
	\item[(a')] If $\ell(s_i \pi) > \ell(\pi)$, or if $s_i \pi s_i = \pi$, then $s_i \cdot \pi = \pi$;
	\item[(b')] Else, $s_i \cdot \pi = s_i \pi s_i$.  The edge connecting $\pi$ to $s_i \cdot \pi$ is solid.
\end{enumerate}

Let $\frX_{\pi}$ be the $K$-orbit closure corresponding to $\pi$. In case (b'), 
$[\frX_{s_i \cdot \pi}] = \partial_i [\frX_{\pi}]$, 
in both ordinary and $S$-equivariant cohomology.

\subsection{The $\Upsilon$-polynomials and their well-definedness}
Let
\begin{equation}
\label{eqn:GOeven}
\Upsilon_{w_0; (GL_n,O_n)} = \displaystyle\prod_{1 \leq i \leq j \leq n-i} (x_i + x_j).
\end{equation}
When $n$ is even, this is both the ordinary and $S$-equivariant cohomology
representative of $[\caY_{w_0}]$ from \cite[Proposition~2.5]{Wyser-13b}. 
On the other hand, when $n$ is odd,  the representative \eqref{eqn:GOeven}
is actually different than that of \cite[Proposition 2.1]{Wyser-13b}.
We will prove the latter's correctness
via Proposition \ref{prop:k-formula-o}, which gives the finer $K$-theoretic result; see Remark~\ref{rmk:k-to-c}.

For all $\pi\in \caI(n)$, a representative for $[\caY_{\pi}]$ can be
computed starting from $\Upsilon_{w_0; (GL_{n},O_{n})}$  and applying a sequence of (possibly $\frac{1}{2}$-scaled) 
divided difference operators along a saturated path from $w_0$ to $\pi$, as indicated in Section~1.2. That is, suppose 
\[\pi=s_{i_1} \cdot (s_{i_2} \cdot \hdots \cdot (s_{i_l} \cdot w_0) \hdots ).\] Then define 
\[\Upsilon_{\pi; (GL_n,O_n)}:={\widetilde \partial}_{i_1}{\widetilde \partial}_{i_2}\cdots{\widetilde \partial}_{i_l}\Upsilon_{w_0; (GL_{n},O_{n})},\]
where ${\widetilde \partial}_i$ equals $\partial_i$ if $i$ corresponds to a
single edge in the path, and $\frac{1}{2}\partial_i$
otherwise.

For the case $(GL_{2n},Sp_{2n})$ define
\begin{equation}
\label{eqn:GS}
\Upsilon_{w_0; (GL_{2n},Sp_{2n})}= \displaystyle\prod_{1 \leq i < j \leq 2n-i} 
(x_i + x_j).
\end{equation}
This is the representative of $[\frX_{w_0}]$ from \cite[Proposition~2.7]{Wyser-13b}.  Given any $\pi \in \mathcal{I}_{\text{fpf}}(2n)$, we can similarly compute a representative for $[\frX_{\pi}]$ using divided difference operators along a chosen saturated path in the weak order.  In this manner, we similarly define polynomials $\Upsilon_{\pi; (GL_{2n},Sp_{2n})}$.

We emphasize that the same $\Upsilon$ polynomials represent
the classes of orbit closures in both ordinary and $S$-equivariant cohomology.  In the equivariant case, they only vacuously involve the extra set of ``equivariant''
$y$-variables; cf. (\ref{eqn:BorelKT}). By contrast, the \emph{double 
Schubert polynomials}, which represent the $T$-equivariant
cohomology classes of Schubert varieties, actually involve these additional ``$y$'' variables. 
This is also true of the $T$-equivariant
representatives for $GL_p\times GL_q$-orbit closures from \cite{WyserYong}.

Our point is to establish that the 
\textbf{$\Upsilon$-polynomials} above are actually well-defined:

\begin{Theorem}
\label{thm:self-consistent}
$\Upsilon_{\pi,(GL_n,O_n)}$ and $\Upsilon_{\pi,(GL_{2n},Sp_{2n})}$ are independent of the path in weak order used to compute them. In addition, each $\Upsilon$-polynomial is a nonnegative linear combination of (single) Schubert polynomials ${\mathfrak S}_w(x_1,\ldots,x_n)$ for $w\in S_n$, and therefore is in ${\mathbb Z}_{\geq 0}[x_1,\ldots,x_n]$.
\end{Theorem}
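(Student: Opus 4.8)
The plan is to prove well-definedness and Schubert-nonnegativity simultaneously, the bridge being a classical structural fact: writing $N$ for the relevant rank ($N=n$ in the orthogonal case, $N=2n$ in the symplectic case), the $\Z$-span of $\{\Schub_w : w\in S_N\}$ maps \emph{isomorphically} onto $H^\star(GL_N/B)\cong\Z[x_1,\dots,x_N]/I^{S_N}$ under Borel's isomorphism, because the $\Schub_w$, $w\in S_N$, are a $\Z$-basis of $\Z[x_1,\dots,x_N]$ lying inside the ``staircase-bounded'' space (monomials $x^a$ with $a_i\le N-i$), which in turn is a set of coset representatives for $I^{S_N}$. Call a polynomial \emph{Schubert-nonnegative} if it is a nonnegative integer combination of $\Schub_w$ with $w\in S_N$; such a polynomial is automatically staircase-bounded. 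Now observe that the polynomial operators $\partial_i$ and $\tfrac12\partial_i$ correspond, under Borel's isomorphism, exactly to the cohomological push--pull operators that, per Section~\ref{sec:weak-orders}, compute $[\caY_\pi]$ (respectively $[\frX_\pi]$) from $[\caY_{w_0}]$ (respectively $[\frX_{w_0}]$). Hence \emph{every} polynomial produced by running the recursion along a saturated path represents the same, path-independent cohomology class. Therefore, once we show that each such polynomial is Schubert-nonnegative (in particular staircase-bounded), path-independence follows for free: two staircase-bounded polynomials congruent modulo $I^{S_N}$ coincide. For $(GL_n,O_n)$ with $n$ odd, one first invokes Proposition~\ref{prop:k-formula-o} via Remark~\ref{rmk:k-to-c} to know that \eqref{eqn:GOeven} genuinely represents $[\caY_{w_0}]$.

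So it remains to prove, by induction along the weak order, that every $\Upsilon$-polynomial is Schubert-nonnegative. For $(GL_{2n},Sp_{2n})$ all weak-order edges are solid, so the recursion only ever applies $\partial_i$; since $\partial_i\Schub_w=\Schub_{ws_i}$ when $\ell(ws_i)<\ell(w)$ and $\partial_i\Schub_w=0$ otherwise (and $ws_i\in S_{2n}$), the operator $\partial_i$ carries Schubert-nonnegative polynomials to Schubert-nonnegative polynomials. Thus the only point to check is the base case: that $\Upsilon_{w_0;(GL_{2n},Sp_{2n})}=\prod_{1\le i<j\le 2n-i}(x_i+x_j)$ is Schubert-nonnegative. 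This is a product of linear forms $x_i+x_j$ over a staircase-shaped index set, and such products are known to expand nonnegatively into Schubert polynomials (they can be evaluated directly by the pipe-dream / Billey--Jockusch--Stanley formula, or recognized as Schubert polynomials of vexillary signed permutations).

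For $(GL_n,O_n)$ dashed edges (hence the operator $\tfrac12\partial_i$) do occur, so we carry a finer invariant. Let $c(\pi)$ be the number of $2$-cycles of the involution $\pi\in\caI(n)$, so $c(w_0)=\lfloor n/2\rfloor$. By the description of the weak order, a solid step $\pi\to s_i\pi s_i$ preserves $c$, while a dashed step $\pi\to s_i\pi$ occurs precisely when $(i,i+1)$ is a $2$-cycle of $\pi$ and decreases $c$ by $1$; in particular the number of dashed steps on any saturated path from $w_0$ to $\pi$ equals $c(w_0)-c(\pi)$. I claim the invariant $(\ast_\pi)$: $\Upsilon_{\pi;(GL_n,O_n)}=2^{c(\pi)}P_\pi$ with $P_\pi$ Schubert-nonnegative. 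For the base case, the diagonal factors $x_i+x_i=2x_i$ of \eqref{eqn:GOeven} (occurring exactly for $2i\le n$, i.e.\ $\lfloor n/2\rfloor$ of them) pull out $2^{\lfloor n/2\rfloor}$, leaving $\bigl(\prod_{2i\le n}x_i\bigr)\prod_{i<j\le n-i}(x_i+x_j)$, again a Schubert-nonnegative product of the type above; so $(\ast_{w_0})$ holds. For the inductive step, $\partial_i(2^{m}Q)=2^{m}\partial_iQ$ and $\partial_i$ preserves Schubert-nonnegativity, so a solid step gives $\Upsilon_{s_i\cdot\pi}=2^{c(\pi)}\partial_iP_\pi=2^{c(s_i\cdot\pi)}(\partial_iP_\pi)$, and a dashed step, where $c(\pi)\ge1$, gives $\Upsilon_{s_i\cdot\pi}=\tfrac12\partial_i(2^{c(\pi)}P_\pi)=2^{c(\pi)-1}\partial_iP_\pi=2^{c(s_i\cdot\pi)}(\partial_iP_\pi)$, an honest element of $\Z_{\ge0}[x_1,\dots,x_n]$; in both cases $(\ast_{s_i\cdot\pi})$ holds and the partial-path result is independent of the choices made. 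This establishes $(\ast_\pi)$ for all $\pi$, hence Schubert-nonnegativity of every $\Upsilon_{\pi;(GL_n,O_n)}$, and with the first paragraph completes the proof.

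I expect the main obstacle to be the base case: proving that the staircase product $\prod(x_i+x_j)$ — together with the monomial factor in the orthogonal case — is a nonnegative combination of Schubert polynomials indexed by $S_N$ (equivalently, identifying it with a Schubert polynomial of a vexillary permutation or expanding it via pipe dreams). Everything else is formal bookkeeping: the behavior of $\partial_i$ on the Schubert basis, the conservation of the $2^{c(\pi)}$ divisibility (which is what guarantees $\tfrac12\partial_i$ never leaves $\Z[x]$), and the cohomology-isomorphism argument that upgrades ``staircase-bounded and class-correct'' to ``path-independent.''
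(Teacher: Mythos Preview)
Your core uniqueness argument --- that any polynomial in the ``staircase space'' $\caL_N$ representing a given class is unique --- is exactly the mechanism the paper uses. But you reach staircase-boundedness by a detour: you aim to prove the stronger property of Schubert-nonnegativity inductively, and then cite staircase-boundedness as a corollary. This manufactures your ``main obstacle,'' the base case that $\prod(x_i+x_j)$ (times the monomial, in the orthogonal case) is Schubert-nonnegative, which you leave unproved. The paper bypasses this entirely: it simply checks that $\Upsilon_{w_0}\in\caL_N$ by an exponent count on the product (each $x_i$ appears in at most $N-i$ linear factors), and since the operators $\partial_i$ and $\tfrac12\partial_i$ send $\caL_N$ to $\caL_N$ over $\mathbb{Q}$, every path produces a polynomial in $\caL_N$ representing the same class; uniqueness follows. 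Nonnegativity is then read off for free: the unique $\caL_N$-representative is, by construction, the polynomial whose Schubert expansion matches the Schubert-class expansion of $[\caY_\pi]$ (resp.\ $[\frX_\pi]$), and that class-level expansion is nonnegative because the orbit closure is an irreducible subvariety (or, concretely, by Brion's path-counting formula cited after the theorem).

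So your proof has no error in logic, but its one genuine gap --- the base-case Schubert-nonnegativity --- is entirely avoidable. Your $2^{c(\pi)}$ invariant and the observation that dashed edges precisely track the loss of a $2$-cycle are correct and interesting; they would yield a purely combinatorial proof of nonnegativity (and the stronger $2^{c(\pi)}$-divisibility) independent of geometric positivity, \emph{if} you can establish the base case. But for the theorem as stated, the paper's route is shorter: check $\Upsilon_{w_0}\in\caL_N$, invoke uniqueness in $\caL_N$, and let the geometry supply the signs.
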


The second statement of the theorem implies that
in ordinary cohomology, these polynomials can also be deduced by a combinatorial formula of 
M.~Brion \cite[Theorem 1.5]{Brion-98}. However, it seems nontrivial to 
explicitly relate the two descriptions of the representatives.
We comment on this further in Section~5.

\begin{Example}
The weak order of $(GL_4,O_4)$ is pictured in Figure \ref{fig:type-a-orthogonal-2}.  The closed orbit corresponds to $w_0=(1,4)(2,3)$, and its class is represented by
\[\Upsilon_{w_0; (GL_4,O_4)} = 4x_1x_2(x_1+x_2)(x_1+x_3).\]
\begin{figure}[h!]
	\centering
	\includegraphics[scale=0.5]{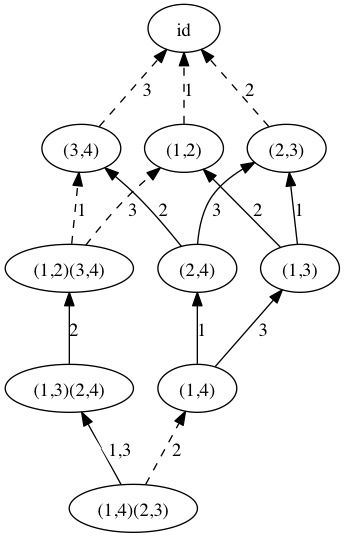}
	\caption{Labelled Hasse diagram for weak order of $(GL_4,O_4)$}\label{fig:type-a-orthogonal-2}
\end{figure}
Consider the orbit for $\pi = (3,4)$. One path from $w_0$ to $\pi$ is labelled
by $s_2s_1s_2$, while another is labelled by $s_1s_2s_3$.  Taking into account
dashed edges, these paths correspond respectively to 
$\frac{1}{2} \partial_2 \partial_1 \partial_2$ and $\frac{1}{2} \partial_1 \partial_2 \partial_3$. 
Although $\frac{1}{2} \partial_2 \partial_1 \partial_2\neq \frac{1}{2} \partial_1 \partial_2 \partial_3$, we have
\[ \frac{1}{2} \partial_2\partial_1\partial_2\Upsilon_{w_0; (GL_4,O_4)} = 
\frac{1}{2} \partial_1\partial_2\partial_3\Upsilon_{w_0; (GL_4,O_4)} = 
 2(x_1+x_2+x_3), \]
in agreement with Theorem~\ref{thm:self-consistent}.

The full table of $\Upsilon$ polynomials for the pair $(GL_4,O_4)$ is given in Table \ref{tab:type-a-o4}.\qed
\end{Example}

\begin{table}[h]

	\begin{tabular}{|l|l|}
		\hline
		Involution $\pi$ & $\Upsilon_{\pi; (GL_4,O_4)}$ \\ \hline
		$(1,4)(2,3)$ & $4x_1x_2(x_1+x_2)(x_1+x_3)$ \\ \hline
		$(1,3)(2,4)$ & $4x_1x_2(x_1+x_2)$ \\ \hline
		$(1,4)$ & $2x_1(x_1+x_2)(x_1+x_3)$ \\ \hline
		$(1,2)(3,4)$ & $4x_1(x_1+x_2+x_3)$ \\ \hline
		$(1,3)$ & $2x_1(x_1+x_2)$ \\ \hline
		$(2,4)$ & $2(x_1+x_2)(x_1+x_2+x_3)$ \\ \hline
		$(1,2)$ & $2x_1$ \\ \hline
		$(3,4)$ & $2(x_1+x_2+x_3)$ \\ \hline
		$(2,3)$ & $2(x_1+x_2)$ \\ \hline
		id & $1$ \\ 
		\hline
	\end{tabular}
	\caption{Polynomial representatives for $(GL_4,O_4)$}\label{tab:type-a-o4}
\end{table}

\begin{Example}\label{ex:gl6sp6}
Consider the pair $(GL_6,Sp_6)$.  The weak order on $\mathcal{I}_{\text{fpf}}(6)$ is depicted in Figure~\ref{fig:type-a-symplectic-2}.
\begin{figure}[h!]

	\centering
	\includegraphics[scale=0.5]{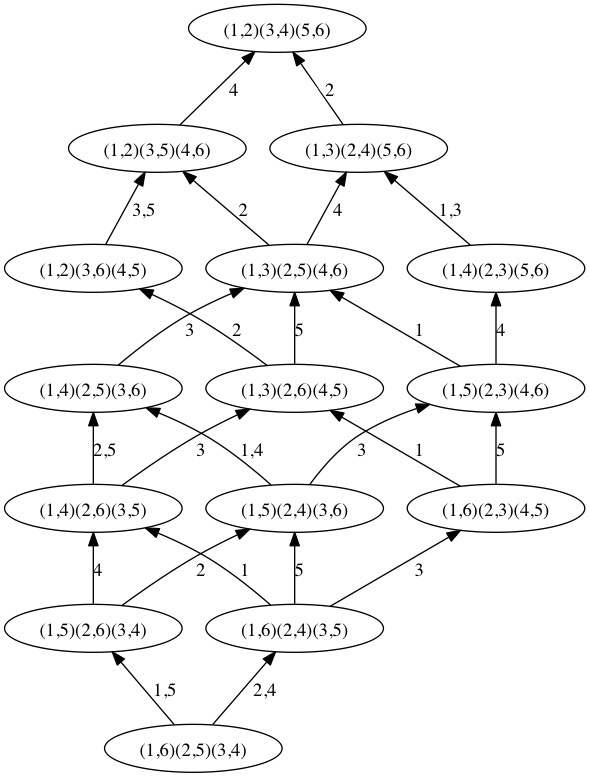}
	\caption{Labelled Hasse diagram for weak order of $(GL_6,Sp_6)$}\label{fig:type-a-symplectic-2}
\end{figure}
The class of the closed orbit, indexed by $w_0=(1,6)(2,5)(3,4)$, is represented by
\[ \Upsilon_{w_0; (GL_6,Sp_6)} = (x_1+x_2)(x_1+x_3)(x_1+x_4)(x_1+x_5)(x_2+x_3)(x_2+x_4). \]
There are a couple of $2$-step paths from $w_0$ to $\pi = (1,5)(2,4)(3,6)$.  One path is labelled by $s_2s_1$, while another is labelled by $s_5s_2$.  We compute a representative for
$[\frX_{\pi}]$ by applying either $\partial_2  \partial_1$ or $\partial_5  \partial_2$ to
$\Upsilon_{w_0; (GL_6,Sp_6)}$.  Since $s_2s_1 \neq s_5s_2$, $\partial_2  \partial_1\neq \partial_5  \partial_2$.  Nevertheless, 
\[ \partial_2\partial_1\Upsilon_{w_0; (GL_6,Sp_6)} = \partial_5\partial_2\Upsilon_{w_0; (GL_6,Sp_6)} = 
 (x_1+x_2)(x_1+x_3)(x_1+x_4)(x_2+x_3), \]
again agreeing with Theorem~\ref{thm:self-consistent}.

The full table of $\Upsilon$ polynomials for the pair $(GL_6,Sp_6)$ is given in Table \ref{tab:type-a-sp6}.\qed
\end{Example}

\begin{table}[h]
	
	\begin{tabular}{|c|l|}
		\hline
		Involution $\pi$ & $\Upsilon_{\pi; (GL_6,Sp_6)}$ \\ \hline
		$(1,6)(2,5)(3,4)$ & $(x_1+x_2)(x_1+x_5)(x_1+x_3)(x_1+x_4)(x_2+x_3)(x_2+x_4)$ \\ \hline
		$(1,5)(2,6)(3,4)$ & $(x_1+x_2)(x_1+x_3)(x_1+x_4)(x_2+x_3)(x_2+x_4)$ \\ \hline
		$(1,6)(2,4)(3,5)$ & $(x_1+x_2)(x_1+x_5)(x_1+x_3)(x_1+x_4)(x_2+x_3)$ \\ \hline
		$(1,4)(2,6)(3,5)$ & $(x_1+x_2)(x_1+x_3)(x_2+x_3)(x_1+x_2+x_4+x_5)$ \\ \hline		
		$(1,5)(2,4)(3,6)$ & $(x_1+x_2)(x_1+x_3)(x_1+x_4)(x_2+x_3)$ \\ \hline
		$(1,6)(2,3)(4,5)$ & $(x_1+x_2)(x_1+x_5)(x_1+x_3)(x_1+x_4)$ \\ \hline
		$(1,4)(2,5)(3,6)$ & $(x_1+x_2)(x_1+x_3)(x_2+x_3)$ \\ \hline
		$(1,3)(2,6)(4,5)$ & $(x_1+x_2)(x_1^2+x_2^2+\displaystyle\sum_{1 \leq i < j \leq 5} x_ix_j)$ \\ \hline
		$(1,5)(2,3)(4,6)$ & $(x_1+x_2)(x_1+x_3)(x_1+x_4)$ \\ \hline		
		$(1,2)(3,6)(4,5)$ & $(x_1+x_2+x_3+x_4)(x_1+x_2+x_3+x_5)$ \\ \hline
		$(1,3)(2,5)(4,6)$ & $(x_1+x_2)(x_1+x_2+x_3+x_4)$ \\ \hline
		$(1,4)(2,3)(5,6)$ & $(x_1+x_2)(x_1+x_3)$ \\ \hline
		$(1,2)(3,5)(4,6)$ & $x_1+x_2+x_3+x_4$ \\ \hline
		$(1,3)(2,4)(5,6)$ & $x_1+x_2$ \\ \hline
		$(1,2)(3,4)(5,6)$ & $1$ \\
		\hline
	\end{tabular}
\caption{Polynomial representatives for $(GL_6,Sp_6)$}\label{tab:type-a-sp6}
\end{table}

\subsection{Stability of the $\Upsilon$-polynomials}\label{sec:intro-stability}
The natural inclusion of $GL_n$ into $GL_N$ ($N \geq n$) 
sends an $n \times n$ matrix $A$ to the
$N \times N$ block matrix whose upper left $n \times n$ corner is $A$, whose lower-right $(N-n) \times (N-n)$ corner is
the identity, and whose off-diagonal blocks are zero.  This inclusion sends the standard Borel of $GL_n$ into the
standard Borel of $GL_N$, and thereby induces an inclusion of flag varieties $Fl_n \hookrightarrow Fl_N$.

First, consider the pair $(GL_n,O_n)$.  Define $\iota:S_n \hookrightarrow S_N$ to send a permutation $\pi$ to the unique permutation 
$\pi':=\iota(\pi)$ satisfying
$\pi'(i) = \pi(i)$ for $i \leq n$, and $\pi'(i) = i$ for $i > n$.  Clearly, if $\pi \in \caI(n)$, then $\pi' \in \caI(N)$.
It follows, for example, from the set-theoretic description of $K$-orbit closures \cite{Wyser-13b}, that
the image of $\caY_{\pi}$
under the aforementioned inclusion $Fl_n \hookrightarrow Fl_N$, is the $O_N$-orbit closure $\caY_{\pi'}$.  

For the case $(GL_{2n},Sp_{2n})$, if $\pi \in \caI_{\text{fpf}}(2n)$, consider the
map $\iota_{\text{fpf}}: S_{2n} \hookrightarrow S_{2N}$ which sends $\pi \in S_{2n}$ to the unique permutation
$\pi':=\iota_{\text{fpf}}(\pi) \in S_{2N}$ which coincides with $\pi$ on $\{1,\hdots,2n\}$, and which transposes $i$ and
$i+1$ for $i=2n+1,2n+3,\hdots,2N-3,2N-1$.  Clearly, $\iota_{\text{fpf}}\in \caI_{\text{fpf}}(2N)$. The set-theoretic description of the orbit closures from  \cite{Wyser-13b} implies that the image of $\frX_{\pi} \subseteq Fl_{2n}$
under the inclusion $Fl_{2n} \hookrightarrow Fl_{2N}$ is $\frX_{\pi'}$.  

We view the following theorem as an analogue of the stability property for Schubert polynomials, which states that
$\Schub_{\iota(w)}=\Schub_{w}$ (see, e.g., \cite[Corollary~2.4.5]{Manivel}):
\begin{Theorem}\label{thm:stability}
For $n \leq N$,
$\Upsilon_{\iota(\pi); (GL_N,O_N)} = \Upsilon_{\pi; (GL_n,O_n)}$ and
$\Upsilon_{\iota_{\text{fpf}}(\pi); (GL_{2N},Sp_{2N})} = \Upsilon_{\pi; (GL_{2n},Sp_{2n})}$.
\end{Theorem}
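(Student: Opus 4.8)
The plan is to reduce Theorem~\ref{thm:stability}, by means of the well-definedness half of Theorem~\ref{thm:self-consistent}, to the single case $\pi=w_0$, and then to settle that case by an explicit divided-difference computation built from \eqref{eqn:GOeven} and \eqref{eqn:GS}. (We write $w_0^{(m)}$ for the long element of $S_m$.) The first point to establish is that $\iota$ and $\iota_{\text{fpf}}$ respect the weak orders: for $\pi\in\caI(n)$ and a simple reflection $s_i$ with $i<n$ one has $\iota(s_i\cdot\pi)=s_i\cdot\iota(\pi)$, and the edge $\pi\to s_i\cdot\pi$ is solid (resp.\ dashed) exactly when $\iota(\pi)\to s_i\cdot\iota(\pi)$ is. This is immediate from the case analysis (a)--(c) of Section~\ref{sec:weak-orders}, since $\iota\colon S_n\hookrightarrow S_N$ is an injective length-preserving homomorphism commuting with left multiplication and with conjugation by any $s_i$, $i<n$. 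The analogous statement holds for $\iota_{\text{fpf}}$ and rules (a')--(b'): although $\iota_{\text{fpf}}$ is not a homomorphism, it still intertwines the $s_i$-actions for $i<2n$ and adds the constant $N-n$ to every length, so all comparisons of $\ell(s_i\pi)$ with $\ell(\pi)$, and all equalities $s_i\pi s_i=\pi$, are preserved. Hence $\iota$ (resp.\ $\iota_{\text{fpf}}$) is an order embedding of weak orders carrying any saturated path from $w_0^{(n)}$ to $\pi$ onto a saturated path from $\iota(w_0^{(n)})$ to $\iota(\pi)$ labelled by the identical sequence of operators $\widetilde\partial_i$.

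Grant for the moment the base cases
\begin{equation}
\label{eqn:base-stability}
\Upsilon_{\iota(w_0^{(n)});\,(GL_N,O_N)}=\Upsilon_{w_0^{(n)};\,(GL_n,O_n)},\qquad
\Upsilon_{\iota_{\text{fpf}}(w_0^{(2n)});\,(GL_{2N},Sp_{2N})}=\Upsilon_{w_0^{(2n)};\,(GL_{2n},Sp_{2n})}.
\end{equation}
Since $w_0^{(N)}$ indexes the unique closed orbit, it is the minimum of the weak order on $\caI(N)$, so there is a saturated path in $\caI(N)$ from $w_0^{(N)}$ to $\iota(w_0^{(n)})$; extend it by the $\iota$-image of a saturated path from $w_0^{(n)}$ to $\pi$ in $\caI(n)$ — legitimate because $w_0^{(n)}\le\pi$ forces $\iota(w_0^{(n)})\le\iota(\pi)$ by the first paragraph. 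By Theorem~\ref{thm:self-consistent} the polynomial $\Upsilon_{\iota(\pi);(GL_N,O_N)}$ may be computed along this concatenation; its first segment produces $\Upsilon_{\iota(w_0^{(n)});(GL_N,O_N)}$, which equals $\Upsilon_{w_0^{(n)};(GL_n,O_n)}$ by \eqref{eqn:base-stability}, and the second segment then applies operators $\widetilde\partial_{i_1}\cdots\widetilde\partial_{i_l}$ with every $i_j<n$ — precisely the operators computing $\Upsilon_{\pi;(GL_n,O_n)}$ from $\Upsilon_{w_0^{(n)};(GL_n,O_n)}$. Therefore $\Upsilon_{\iota(\pi);(GL_N,O_N)}=\Upsilon_{\pi;(GL_n,O_n)}$, and the symplectic statement follows verbatim with $\iota_{\text{fpf}}$ in place of $\iota$.

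Everything thus reduces to \eqref{eqn:base-stability}, which I expect to be the crux. I would argue by induction on $N-n$, so that it suffices to treat $N=n+1$ (resp.\ the inclusion $GL_{2n}\hookrightarrow GL_{2n+2}$). Separating off the factors with $j=n+1-i$ in \eqref{eqn:GOeven}, and the factors with $j\in\{2n+1-i,\,2n+2-i\}$ in \eqref{eqn:GS}, yields the factorizations
\[
\Upsilon_{w_0^{(n+1)};\,(GL_{n+1},O_{n+1})}=\Upsilon_{w_0^{(n)};\,(GL_n,O_n)}\cdot\prod_{1\le i\le n+1-i}(x_i+x_{n+1-i}),
\]
\[
\Upsilon_{w_0^{(2n+2)};\,(GL_{2n+2},Sp_{2n+2})}=\Upsilon_{w_0^{(2n)};\,(GL_{2n},Sp_{2n})}\cdot\prod_{1\le i\le n}(x_i+x_{2n+1-i})(x_i+x_{2n+2-i}).
\]
It then suffices to exhibit a saturated weak-order path realizing \eqref{eqn:base-stability} and to check that its composite operator telescopes through this product of linear forms. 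For $(GL_n,O_n)$ such a path is explicit: apply $s_{\lfloor n/2\rfloor+1},s_{\lfloor n/2\rfloor+2},\dots,s_n$ in this order, the first step being dashed when $n$ is odd (it unlinks the central pair of $w_0^{(n+1)}$) and all later steps solid, so that the operator is $\partial_n\cdots\partial_{\lfloor n/2\rfloor+2}\,\widetilde\partial_{\lfloor n/2\rfloor+1}$; running along it eats the factors $x_i+x_{n+1-i}$ one by one and leaves $\Upsilon_{w_0^{(n)};(GL_n,O_n)}$. (For $n=3$ this is the path $(1,4)(2,3)\to(1,4)\to(1,3)$ of Figure~\ref{fig:type-a-orthogonal-2}, with operator $\partial_3\cdot\tfrac12\partial_2$.) For $(GL_{2n},Sp_{2n})$ one similarly writes down a path — e.g.\ $(1,6)(2,5)(3,4)\to(1,6)(2,4)(3,5)\to(1,6)(2,3)(4,5)\to(1,5)(2,3)(4,6)\to(1,4)(2,3)(5,6)$ realizes the $(GL_4,Sp_4)\hookrightarrow(GL_6,Sp_6)$ base case, with operator $\partial_4\partial_5\partial_3\partial_4$ — and the same telescoping occurs. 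Making the choice of path and the telescoping uniform in $n$, rather than the formal reduction above, is the main obstacle. Alternatively, \eqref{eqn:base-stability} can be approached geometrically from the behavior of $K$-orbit closures under $Fl_n\hookrightarrow Fl_N$, but converting the resulting identity of cohomology classes into an equality of polynomial representatives still relies on the Schubert-polynomial expansion of Theorem~\ref{thm:self-consistent}.
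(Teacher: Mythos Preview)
Your reduction to the base case $\pi=w_0$ via the order-embedding property of $\iota$ and $\iota_{\text{fpf}}$, followed by induction to $N=n+1$ and an explicit telescoping divided-difference computation, is exactly the paper's approach. Your orthogonal path (labels $\lfloor n/2\rfloor+1,\ldots,n$, first edge dashed precisely when $n$ is odd) and the accompanying factor-stripping argument match the paper's.

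The one piece you flag as ``the main obstacle'' --- a uniform symplectic path --- is supplied by the paper as follows: in $\caI_{\text{fpf}}(2N)$ the sequence $s_1,s_2,\ldots,s_{2N-2}$ (read bottom to top) carries $w_0^{(2N)}$ to $\iota_{\text{fpf}}(w_0^{(2N-2)})$, and then $\partial_{2N-2}\circ\cdots\circ\partial_1$ applied to $\Upsilon_{w_0^{(2N)};(GL_{2N},Sp_{2N})}$ strips first the factors $x_i+x_j$ with $i+j=2N$ (in the order $i=1,\ldots,N-1$) and then those with $i+j=2N-1$ (in the order $i=N-1,\ldots,1$), since at each step the remaining product divided by the next factor is symmetric in the relevant pair of variables. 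Your example path $(4,3,5,4)$ for $N=3$ is a different valid choice, but the $1,\ldots,2N-2$ labelling is what makes the induction go through uniformly.
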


We are unaware of any analogous stability property of the representatives considered in \cite{WyserYong}
for the pair $(GL_{p+q},GL_p \times GL_q)$.

\subsection{Organization} In Section~2, we describe some $K$-theoretic extensions of the theorems of this section. These provide
the first $K$-theoretic results for the symmetric pairs considered
in this paper, complementing those for the $K=GL_p\times GL_q$ case
of \cite{WyserYong}. We have complete analogues for the case 
$(GL_{2n},Sp_{2n})$, using \emph{Demazure operators}. However, for $(GL_n,O_n)$, we only provide a
formula for the $K$-class of the closed orbit. In the latter case, we demonstrate by example 
that Demazure operators cannot be similarly used
to make computations. 
In Section~3, we give combinatorial proofs of Theorems~\ref{thm:self-consistent}
and~\ref{thm:stability}.
Section~4 gives the proofs of the results of Section~2, using
equivariant localization arguments combined with a self-intersection formula of R.~W.~Thomason. In Section~5, we present some final remarks.

\section{$K$-theoretic extensions}\label{sec:intro-k-theory}
\subsection{Results}
For the pair $(GL_{2n},Sp_{2n})$, we give complete extensions of Section~1's results to ($S$-equivariant)
$K$-theory, the Grothendieck ring for the category of ($S$-equivariant) locally free sheaves on $GL_{2n}/B$.  

The ordinary
$K$-theory ring can be realized concretely as
\begin{equation}
\label{eqn:BorelK}
K^0(GL_{2n}/B) \cong \Z[x_1,\hdots,x_{2n}]/I, 
\end{equation}
where $I$ is the ideal generated by $e_d(x_1,\hdots,x_{2n}) - \binom{2n}{d}$ for $1 \leq d \leq 2n$, with $e_d$ the
elementary symmetric polynomial of degree $d$; cf.~\cite[Section 2.3]{Knutson.Miller:annals}.

The $S$-equivariant $K$-theory can be realized as a quotient of a (Laurent) polynomial ring in two sets of variables,
namely as
\begin{equation}
\label{eqn:BorelKT}
 K_S^0(GL_{2n}/B) \cong \Z[x_1^{\pm 1},\hdots,x_{2n}^{\pm 1}][y_1^{\pm 1},\hdots,y_n^{\pm 1}]/J, 
\end{equation}
with $J$ generated by the differences $e_d(x_1,\hdots,x_{2n})-e_d(y_1,\hdots,y_n,y_n^{-1},\hdots,y_1^{-1})$.  This can be deduced from the description of $K_S^0(G/B)$ given in Section \ref{sec:background-k-theory}, for which a reference is \cite{Kostant-Kumar}.
The natural map from $K_S^0(G/B)$ to $K^0(G/B)$ which forgets the $S$-equivariant structure corresponds to setting
all $y_i$ equal to $1$.

Thus for $\pi \in \caI_{\text{fpf}}(2n)$, we seek a polynomial in the $x$ and $y$
variables which represents $[\caO_{\frX_{\pi}}]$, the class of the structure sheaf of the orbit closure $\frX_{\pi}$
(considered as a coherent sheaf on $GL_{2n}/B$) in $K_S^0(GL_{2n}/B)$. As with the results of Section~1, only the
$x$ variables actually appear in our representatives:

\begin{Theorem}\label{thm:k-formula-sp}
  $[\caO_{\frX_{w_0}}]$ is represented, in both ordinary and $S$-equivariant $K$-theory, by the polynomial
\[ \Upsilon_{w_0, (GL_{2n},Sp_{2n})}^K = \displaystyle\prod_{1 \leq i < j \leq 2n-i} (1 - x_ix_j). \]
\end{Theorem}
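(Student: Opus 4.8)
The plan is to mimic the strategy for the cohomological statement (equation \eqref{eqn:GS} and \cite[Proposition~2.7]{Wyser-13b}), but now in $S$-equivariant $K$-theory, and then use the forgetful map $y_i\mapsto 1$ (which by \eqref{eqn:BorelKT} specializes to ordinary $K$-theory) to get both assertions at once. Concretely, the closed $Sp_{2n}$-orbit $\frX_{w_0}$ is a single $B$-orbit, hence a Schubert variety, namely $X_{w_0}$ itself? No --- rather, $\frX_{w_0}$ is the closure of the $Sp_{2n}$-fixed flag, which as a subvariety of $GL_{2n}/B$ is smooth and is in fact a Schubert variety for a suitable choice: one checks from the set-theoretic description in \cite{Wyser-13b} that $\frX_{w_0}$ is cut out, flag-theoretically, by the vanishing of the symplectic form on appropriate steps, which expresses it as a (transverse) intersection of Schubert conditions. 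So the first step is to identify $\frX_{w_0}$ explicitly as a smooth subvariety --- ideally realize it as the zero locus of a regular section of an explicit equivariant vector bundle $\caE$ on $GL_{2n}/B$ built from the tautological subquotient line bundles $L_1,\dots,L_{2n}$ (where $c_1(L_i)\leftrightarrow x_i$ and, in $K$-theory, $[L_i]\leftrightarrow x_i$ in our conventions).

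The key computational step is then the Koszul resolution: if $\frX_{w_0}=Z(s)$ for a regular section $s$ of a rank-$r$ bundle $\caE$ with $r=\operatorname{codim}\frX_{w_0}=\#\{(i,j): 1\le i<j\le 2n-i\}$, then in $K_S^0(GL_{2n}/B)$ one has $[\caO_{\frX_{w_0}}]=\lambda_{-1}(\caE^\vee)=\sum_{k\ge0}(-1)^k[\wedge^k\caE^\vee]$. If $\caE$ splits equivariantly (or at least has a filtration) with $K$-theoretic Chern roots exactly the monomials $x_ix_j$ for $1\le i<j\le 2n-i$, then $\lambda_{-1}(\caE^\vee)=\prod_{1\le i<j\le 2n-i}(1-x_ix_j)$, which is the desired formula. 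So the second step is to check that the relevant bundle is $\caE=\bigoplus_{1\le i<j\le 2n-i}(L_i\otimes L_j)^{\vee}$ or similar, with the symplectic form furnishing the section; this is the same bookkeeping that produces $\prod(x_i+x_j)$ in cohomology upon taking the top Chern class, so the indexing set must match \eqref{eqn:GS} on the nose.

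I would organize the write-up as: (1) recall the set-theoretic/flag description of $\frX_{w_0}$ from \cite{Wyser-13b} and exhibit it as $Z(s)$; (2) verify regularity of $s$ (equivalently, that $\operatorname{codim}\frX_{w_0}=\operatorname{rk}\caE$, using the known dimension of the closed orbit, $\dim\frX_{w_0}=\dim Sp_{2n}/B_{Sp}$, versus $\dim GL_{2n}/B$); (3) invoke the $K$-theoretic Koszul/self-intersection formula $[\caO_{Z(s)}]=\lambda_{-1}(\caE^\vee)$; (4) compute $\lambda_{-1}(\caE^\vee)$ from the $K$-theory Chern roots of $\caE$; (5) observe the $y$-variables never enter, so the same polynomial works $S$-equivariantly and, under $y_i\mapsto1$, in ordinary $K$-theory. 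An alternative to (1)--(2), if the explicit bundle is awkward, is to start from the known cohomological representative and lift: one can instead compute $[\caO_{\frX_{w_0}}]$ by equivariant localization at the $T$-fixed points $wB$ and match it against the localizations of $\prod(1-x_ix_j)$, using that both the class and the polynomial are supported (nonzero) exactly on $w\ge w_0$ in Bruhat order, i.e.\ only at $w=w_0$; this is the approach flagged for Section~4, ``equivariant localization arguments combined with a self-intersection formula of R.~W.~Thomason,'' so I would lean on Thomason's self-intersection formula to produce the localization $\prod(1-x_ix_j)$ at the fixed point directly.

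The main obstacle I anticipate is step (1)--(2): pinning down $\frX_{w_0}$ as the \emph{scheme-theoretic} (reduced, and cut out transversally) zero locus of a section of the \emph{right} equivariant bundle, so that the Koszul complex is actually a resolution of $\caO_{\frX_{w_0}}$ --- one must rule out excess intersection and nilpotents, which is exactly where the numerology of the index set $\{1\le i<j\le 2n-i\}$ has to be reconciled with $\dim GL_{2n}/B-\dim Sp_{2n}/B_{Sp}$. Once transversality is in hand, the rest is the formal $\lambda_{-1}$ computation and the trivial observation about the $y$-variables.
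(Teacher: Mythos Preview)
Your primary approach (exhibit $\frX_{w_0}$ as the zero locus of a regular section of a split bundle $\caE$ and apply a global Koszul resolution) is a genuinely different route from the paper's.  It is plausible in outline, but the step you flag as the obstacle is more serious than a mere transversality check: the symplectic form does \emph{not} naively descend to a section of $\bigoplus_{(i,j)} (L_i\otimes L_j)^\vee$ on all of $GL_{2n}/B$, because the pairing $L_i\otimes L_j\to\caO$ (with $L_i=F_i/F_{i-1}$) is only well-defined once one has already imposed lower isotropy conditions $\omega(F_{i-1},F_j)=\omega(F_i,F_{j-1})=0$.  So building $\caE$ and $s$ globally requires an inductive or filtered construction, not just a dimension count.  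The paper sidesteps this entirely.

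Your alternative approach (localization plus Thomason's self-intersection formula) \emph{is} what the paper does, but your sketch of it contains a genuine error.  The closed orbit $\frX_{w_0}$ is not a point, nor is it ``supported only at $w=w_0$'': it is isomorphic to the full flag variety $Sp_{2n}/B_{Sp}$, and the $S$-fixed points it contains are precisely the \emph{mirrored} permutations $w\in S_{2n}$ (those with $w(2n+1-i)=2n+1-w(i)$), of which there are $2^n n!$.  The paper therefore checks two things: (A) at each mirrored $w$, Thomason's self-intersection formula gives $[\caO_{\frX_{w_0}}]|_w=\prod_{\chi\in S(w)}(1-e^{-\chi})$, where $S(w)$ is the multiset of $S$-weights on the normal space at $w$; one computes $S(w)$ by restricting the weights of $T_1(G/B)$ to $S$, discarding (with multiplicity one) the roots of $K$, and then applying $w$ as a signed permutation, and verifies this matches $\Upsilon^K|_w$.  (B) At each non-mirrored $w$, one must show $\Upsilon^K|_w=0$; the paper does this by observing that non-mirroredness produces indices $i<k\le 2n-i$ with $w(i)+w(k)=2n+1$, so the factor $1-x_ix_k$ restricts to $1-e^{Y_l}e^{-Y_l}=0$.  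Your proposal omits the entire structure of step~(A) at the many fixed points in the orbit, and gives no mechanism for step~(B).
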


The Demazure operator (or ``isobaric divided difference operator'') $D_i$ is a $K$-theoretic analogue of the divided
difference operator $\partial_i$.  It is defined by:
\[ D_i(f) = \dfrac{x_{i+1}f - x_i s_i(f)}{x_{i+1}-x_i} = -\partial_i(x_{i+1}f). \]

If one starts with $\Upsilon_{w_0, (GL_{2n},Sp_{2n})}^K$ and applies a sequence of Demazure operators corresponding to a path from $w_0$ to $\pi$ in $\caI_{\text{fpf}}(2n)$, the result $\Upsilon_{\pi, (GL_{2n},Sp_{2n})}^K$ represents $[\caO_{\frX_{\pi}}]$ in both ordinary $K$-theory (\ref{eqn:BorelK}) and $S$-equivariant $K$-theory (\ref{eqn:BorelKT}).  
This is justified in Section \ref{sec:div-diff-probs}. As in Section~1,
we show that the representative is independent of the sequence of Demazure operators applied.

\begin{Theorem}\label{thm:well-defined-stable-k-theory}
The polynomial $\Upsilon_{\pi, (GL_{2n},Sp_{2n})}^K$ is independent of the choice of path in weak order used to compute it.
The $\Upsilon^K$-polynomials are stable with respect to the inclusion $\iota_{\text{fpf}}: \caI_{\text{fpf}}(2n) \hookrightarrow \caI_{\text{fpf}}(2N)$ for any $N \geq n$.
\end{Theorem}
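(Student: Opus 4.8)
The plan is to imitate, in the $K$-theoretic setting, the combinatorial argument that Section~3 gives for Theorem~\ref{thm:self-consistent} and Theorem~\ref{thm:stability}, using the fact that the Demazure operators $D_i$ satisfy the same braid and commutation relations as the $\partial_i$ (they furnish a representation of $S_{2n}$ on $\Z[x_1^{\pm1},\dots,x_{2n}^{\pm1}]$, and likewise $\Z[x^{\pm1}][y^{\pm1}]$), and that in addition $D_i$ is idempotent: $D_i^2 = D_i$. The subtlety, exactly as in the cohomological case, is that two saturated paths from $w_0$ to $\pi$ in $\caI_{\text{fpf}}(2n)$ need not be labelled by reduced words of the same permutation; all transitions in $\caI_{\text{fpf}}$ are of type (b'), so the operator attached to a path labelled $s_{i_1},\dots,s_{i_l}$ is $D_{i_1}\cdots D_{i_l}$ applied to $\Upsilon^K_{w_0}$, and we must show this product, applied to $\Upsilon^K_{w_0}$, depends only on the endpoint $\pi$.

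The key steps, in order. First I would record the analogue of the structural lemma behind the proof of Theorem~\ref{thm:self-consistent}: any two saturated weak-order paths from $w_0$ to a fixed $\pi\in\caI_{\text{fpf}}(2n)$ can be connected by a sequence of local moves, each of which is either (i) a braid/commutation move on the word (replacing a consecutive subword by another reduced word for the same parabolic coset element), or (ii) an insertion/deletion of a letter $s_i$ that acts trivially at that stage of the path, i.e. where $s_i\cdot\pi'=\pi'$ — equivalently where $s_i\pi's_i=\pi'$ or $\ell(s_i\pi')>\ell(\pi')$. Moves of type (i) leave the operator unchanged because the $D_i$ satisfy the braid relations. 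For moves of type (ii) I would show directly that if $s_i\cdot\mathfrak{X}_{\pi'}=\mathfrak{X}_{\pi'}$ then $D_i\Upsilon^K_{\pi'}=\Upsilon^K_{\pi'}$: when $\ell(s_i\pi')>\ell(\pi')$ the representative $\Upsilon^K_{\pi'}$ (being a product over a certain set of factors $(1-x_ax_b)$ closed under the relevant moves) is already symmetric in $x_i,x_{i+1}$, hence fixed by the idempotent $D_i$; the case $s_i\pi's_i=\pi'$ with $\ell(s_i\pi')<\ell(\pi')$ does not arise for fpf involutions by (a'), so only the symmetric case and genuine $\partial$-type transitions occur. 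This reduces well-definedness to checking that the set of linear factors defining each $\Upsilon^K_\pi$ is intrinsic to $\pi$ — which one extracts by tracking, through the formula $D_i(f)=-\partial_i(x_{i+1}f)$ and the product shape of $\Upsilon^K_{w_0}$, that applying $D_i$ in a type-(b') step simply deletes the factor $(1-x_ix_j)$ in which $\{i,i+1\}$ plays the "outer" role; this is parallel to the bookkeeping already carried out for the $\partial_i$ in Section~3, and in fact one can phrase it so that $\Upsilon^K_\pi$ and $\Upsilon_\pi$ have the same index set of factors, with $(x_a+x_b)$ replaced by $(1-x_ax_b)$.

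For stability, I would argue exactly as for Theorem~\ref{thm:stability}: the inclusion $\iota_{\text{fpf}}:\caI_{\text{fpf}}(2n)\hookrightarrow\caI_{\text{fpf}}(2N)$ lifts a saturated path from $w_0$ (in $\caI_{\text{fpf}}(2n)$) to $\pi$ to a saturated path from $w_0$ (in $\caI_{\text{fpf}}(2N)$) to $\iota_{\text{fpf}}(\pi)$, namely by first walking from the long element of $S_{2N}$ down to $\iota_{\text{fpf}}(w_0^{(2n)})$ using only the "high" simple reflections $s_j$, $j>2n$ (type-(b') steps that act trivially on the "low" factors), and then mirroring the given low path; comparing the starting representatives, $\Upsilon^K_{w_0,(GL_{2N},Sp_{2N})}=\prod_{1\le i<j\le 2N-i}(1-x_ix_j)$ restricts — after the high Demazure operators delete precisely the factors involving the indices $>2n$ — to $\Upsilon^K_{w_0,(GL_{2n},Sp_{2n})}$ in the first $2n$ variables, and all subsequent $D_i$ with $i<2n$ act identically in both rings (the extra $x$-variables never enter). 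The last observation also handles the equivariant statement: since no $y_i$ ever appears, the identity of polynomials forces equality of classes in $K_S^0$, and setting $y_i=1$ gives the ordinary statement. The main obstacle I anticipate is step one — proving cleanly that the factor index set of $\Upsilon^K_\pi$ is well-defined, i.e. doing the combinatorial bookkeeping for how $D_i$ acts on these products across type-(b') transitions, so that the "insertion of a trivial letter" moves really are inert; but this is a finite, local computation of the same flavor as the one already completed for the divided-difference case in Section~3, and I would leverage that argument rather than redo it from scratch.
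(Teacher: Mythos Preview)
Your approach to well-definedness has a genuine gap. You repeatedly rely on the claim that each $\Upsilon^K_{\pi}$ is a product of factors of the form $(1-x_ax_b)$, and that a type-(b') Demazure operator ``simply deletes'' one such factor. This is false. Already for $(GL_6,Sp_6)$, Table~\ref{tab:type-a-sp6-k} gives
\[
\Upsilon^K_{(1,4)(2,6)(3,5)} = (1-x_1x_2)(1-x_1x_3)(1-x_2x_3)(1-x_1x_2x_4x_5),
\]
whose last factor is not of that shape; the entry for $(1,3)(2,6)(4,5)$ is even further from a product of binomials. So the ``factor index set'' bookkeeping you propose cannot be carried through, and with it collapses your justification that $D_i$ fixes $\Upsilon^K_{\pi'}$ whenever $s_i\cdot\pi'=\pi'$ (which is in any case circular, since it presupposes $\Upsilon^K_{\pi'}$ is already well-defined). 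There is also a structural problem: two saturated chains from $w_0$ to $\pi$ have the same length, so ``insertion/deletion of a trivial letter'' is not a move between saturated chains; and since the words labelling two such chains may correspond to \emph{different} permutations (cf.\ Example~\ref{ex:gl6sp6}), braid moves alone cannot connect them either.

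The paper's argument avoids all of this and is much shorter, directly imitating the proof of Theorem~\ref{thm:self-consistent} with Grothendieck polynomials in place of Schubert polynomials. After the substitution $x_i\mapsto 1-x_i$, the closed-orbit representative lies in $\mathcal{L}_{2n}$, for which the $\mathfrak{G}_w(\mathbf{1-x})$ with $w\in S_{2n}$ form a basis. Demazure operators send Grothendieck polynomials to Grothendieck polynomials (or zero), so any polynomial computed along any path is a $\Z$-combination of $\mathfrak{G}_w$, $w\in S_{2n}$. But only one such combination represents the fixed class $[\mathcal{O}_{\mathfrak{X}_\pi}]$, namely the one matching its expansion in the basis of Schubert structure-sheaf classes. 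Hence all paths give the same polynomial. No path combinatorics is needed.

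For stability, your overall strategy (reduce to the closed orbit, then compute explicitly) matches the paper's, but your specific path is wrong: one \emph{cannot} pass from $w_0\in\caI_{\text{fpf}}(2N)$ to $\iota_{\text{fpf}}(w_0^{(2n)})$ using only $s_j$ with $j>2n$. Try $N=2$, $n=1$: the only such reflection is $s_3$, and from $(1,4)(2,3)$ it reaches $(1,3)(2,4)$ but never $(1,2)(3,4)$. The paper instead uses the path of Lemma~\ref{lem:symplectic-chain}, labelled $s_1,\ldots,s_{2N-2}$, and checks (via $D_i(f)=-\partial_i(x_{i+1}f)$ and the same inductive factoring as in Lemma~\ref{lem:symplectic-divided-difference-sequence}) that the corresponding Demazure operators strip off the factors $1-x_ix_j$ with $i+j=2N$ and then those with $i+j=2N-1$.
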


The $\Upsilon^K$-polynomials for the case $(GL_6,Sp_6)$ are given in Table \ref{tab:type-a-sp6-k}.

\begin{table}[h]

	\begin{tabular}{|c|l|}
		\hline
		Involution $\pi$ & $\Upsilon_{\pi; (GL_6,Sp_6)}$ \\ \hline
		$(1,6)(2,5)(3,4)$ & $(1-x_1x_2)(1-x_1x_3)(1-x_1x_4)(1-x_1x_5)(1-x_2x_3)(1-x_2x_4)$ \\ \hline
		$(1,5)(2,6)(3,4)$ & $(1-x_1x_2)(1-x_1x_3)(1-x_1x_4)(1-x_2x_3)(1-x_2x_4)$ \\ \hline
		$(1,6)(2,4)(3,5)$ & $(1-x_1x_2)(1-x_1x_3)(1-x_1x_4)(1-x_1x_5)(1-x_2x_3)$ \\ \hline
		$(1,4)(2,6)(3,5)$ & $(1-x_1x_2)(1-x_1x_3)(1-x_2x_3)(1-x_1x_2x_4x_5)$ \\ \hline		
		$(1,5)(2,4)(3,6)$ & $(1-x_1x_2)(1-x_1x_3)(1-x_1x_4)(1-x_2x_3)$ \\ \hline
		$(1,6)(2,3)(4,5)$ & $(1-x_1x_2)(1-x_1x_3)(1-x_1x_4)(1-x_1x_5)$ \\ \hline
		$(1,4)(2,5)(3,6)$ & $(1-x_1x_2)(1-x_1x_3)(1-x_2x_3)$ \\ \hline
		$(1,3)(2,6)(4,5)$ & $(1-x_1x_2)(1\!-\!x_1x_2x_3x_4\!-\!x_1x_2x_3x_5\!-\!x_1x_2x_4x_5\!+\!x_1^2x_2x_3x_4x_5\!+\!x_1x_2^2x_3x_4x_5)$ \\ \hline
		$(1,5)(2,3)(4,6)$ & $(1-x_1x_2)(1-x_1x_3)(1-x_1x_4)$ \\ \hline		
		$(1,2)(3,6)(4,5)$ & $(1-x_1x_2x_3x_4)(1-x_1x_2x_3x_5)$ \\ \hline
		$(1,3)(2,5)(4,6)$ & $(1-x_1x_2)(1-x_1x_2x_3x_4)$ \\ \hline
		$(1,4)(2,3)(5,6)$ & $(1-x_1x_2)(1-x_1x_3)$ \\ \hline
		$(1,2)(3,5)(4,6)$ & $1-x_1x_2x_3x_4$ \\ \hline
		$(1,3)(2,4)(5,6)$ & $1-x_1x_2$ \\ \hline
		$(1,2)(3,4)(5,6)$ & $1$ \\
		\hline
	\end{tabular}
	\caption{$K$-theoretic polynomial representatives for $(GL_6,Sp_6)$}\label{tab:type-a-sp6-k}
\end{table}

For $(GL_n,O_n)$, we only have a formula for the class of the closed orbit:
\begin{Proposition}\label{prop:k-formula-o}
  $[\caO_{\caY_{w_0}}]$ is represented, in both ordinary and $S$-equivariant $K$-theory, by the polynomial
\[ \Upsilon_{w_0, (GL_n,O_n)}^K = \displaystyle\prod_{1 \leq i \leq j \leq n-i} (1 - x_ix_j). \]
\end{Proposition}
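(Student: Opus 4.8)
The plan is to compute $[\caO_{\caY_{w_0}}]$ directly via equivariant localization, following the same strategy that Section~4 will use for the symplectic case (Theorem~\ref{thm:k-formula-sp}). The key input is that $\caY_{w_0}$, being the unique closed $O_n$-orbit, is a single $S$-fixed-point-rich subvariety of $GL_n/B$ whose geometry is explicitly understood from \cite{Wyser-13b}; in fact it is known to be isomorphic to a flag variety for the appropriate smaller group (roughly $GL_{\lceil n/2\rceil}$ acting on isotropic flags), so it is smooth and its $S$-fixed points and tangent weights at those points are computable. First I would identify, for each $S$-fixed point $eB$ lying on $\caY_{w_0}$, the weight of the normal space $N_{eB}(\caY_{w_0}\subseteq GL_n/B)$ as a product of characters of $S$; the self-intersection formula of Thomason then gives the localization $[\caO_{\caY_{w_0}}]|_{eB} = \prod (1 - e^{-\beta})$ over the normal weights $\beta$ at $eB$, while $[\caO_{\caY_{w_0}}]|_{wB} = 0$ for $wB\notin \caY_{w_0}$.

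Next I would take the candidate polynomial $\Upsilon^K_{w_0,(GL_n,O_n)} = \prod_{1\le i\le j\le n-i}(1-x_ix_j)$ and compute its image under Borel-type localization: restricting to the fixed point $wB$ means substituting $x_i \mapsto$ the corresponding character of $S$ (with the equivariant $y$-variables specialized as dictated by the embedding $S\hookrightarrow GL_n$, exactly as in (\ref{eqn:BorelKT})). The crux of the proof is then to check two things: (a) at every $wB\notin\caY_{w_0}$ some factor $1-x_ix_j$ specializes to $0$, forcing the restriction to vanish — this should follow because outside the closed orbit the relevant pair of characters becomes inverse to one another; and (b) at every $wB\in\caY_{w_0}$ the specialized product matches $\prod(1-e^{-\beta})$ over the normal weights computed in the first step. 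Since the classes in $K_S^0(GL_n/B)$ are determined by their restrictions to the $S$-fixed points (the localization map is injective, as $GL_n/B$ has finitely many fixed points and no odd cohomology), matching all restrictions proves $[\caO_{\caY_{w_0}}] = \Upsilon^K_{w_0,(GL_n,O_n)}$ in $K_S^0$; setting $y_i = 1$ gives the ordinary statement.

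The main obstacle I anticipate is the bookkeeping in step (a)/(b): one must pin down exactly which fixed points $wB$ lie on $\caY_{w_0}$ (these are the $w$ with $w^{-1}$ below $w_0$ in the relevant Bruhat-type order, or equivalently those $w$ for which the standard flag is isotropic in the appropriate sense), and then, for each such $w$, compute the multiset of normal weights of $\caY_{w_0}$ inside $GL_n/B$ and show it equals $\{-(\alpha_i+\alpha_j) : 1\le i\le j\le n-i\}$ suitably $w$-twisted. The parity split between $n$ even and $n$ odd means the set of index pairs $\{(i,j): i\le j\le n-i\}$ behaves slightly differently (when $n$ is odd, the ``diagonal'' term $i=j$ with $2i=n$... does not arise, but $i=j<n/2$ does, giving factors $1-x_i^2$), so I would handle the count of normal directions carefully in both cases and confirm it equals $\dim GL_n/B - \dim\caY_{w_0}$. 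Once the fixed-point data is assembled, the verification of (a) and (b) is a finite, essentially combinatorial check, and Remark~\ref{rmk:k-to-c} then recovers the cohomological representative \eqref{eqn:GOeven} (in the odd case, matching \cite[Proposition~2.1]{Wyser-13b}) by passing to leading terms.
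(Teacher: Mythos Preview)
Your proposal takes essentially the same approach as the paper: compute $[\caO_{\caY_{w_0}}]|_w$ at each $S$-fixed point via Thomason's self-intersection formula, then use injectivity of localization to match the candidate polynomial against these restrictions by verifying (A) agreement at fixed points on the closed orbit and (B) vanishing at fixed points off it. The only imprecisions in your sketch are cosmetic --- the closed orbit is the flag variety for $K=O_n$ itself (not for $GL_{\lceil n/2\rceil}$), and its $S$-fixed points are exactly the mirrored permutations rather than something characterized via Bruhat order --- and these would be sorted out immediately upon execution; the paper's proof for the orthogonal case is in fact just the symplectic argument with the single modification that the weights $-2Y_i$ are not discarded (they are not roots in types $B$/$D$), which produces precisely the extra diagonal factors $1-x_i^2$ you anticipated.
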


As explained in Section~2.2, one cannot expect Demazure operators to compute the $K$-theory representatives
for  $(GL_n,O_n)$. That said, we  computed tables of representatives for $n=3,4$ differently.  They appear in Tables \ref{tab:type-a-o3-k} and \ref{tab:type-a-o4-k}.  (Recall that the weak order for the case $n=4$ was given as Figure \ref{fig:type-a-orthogonal-2}.  The weak order for $n=3$ appears in Figure \ref{fig:type-a-orthogonal-1}.)

The representatives in Tables \ref{tab:type-a-o3-k} and \ref{tab:type-a-o4-k} were computed using a geometric perspective originally applied by A.~Knutson-E.~Miller \cite{Knutson.Miller:annals} to justify Schubert polynomials.
For a variety $X\subset GL_n/B$, consider the preimage $\pi^{-1}(X)\subset GL_n$ under the natural projection,
and $\overline{\pi^{-1}(X)}\subset {\rm Mat}_{n\times n}$.  If $X$ is a $K$-orbit closure, then $\overline{\pi^{-1}(X)}$ is stable under the action of $B_K \times B$, where $B_K = B \cap K$ is the standard Borel subgroup of $K$ (acting by multiplication on the left), and $B$ is the standard Borel of $GL_n$ (acting by inverse multiplication on the right).
Identifying
\[[{\mathcal O}_{\overline{\pi^{-1}(X)}}]_{B_K \times B} \in K^\circ_{B_K \times B}({\rm Mat}_{n\times n}) \mbox{\  with 
$[{\mathcal O}_{\overline{\pi^{-1}(X)}}]_{S \times T} \in K^\circ_{S \times T}({\rm Mat}_{n\times n})$}\]
(see \cite[Corollary 2.3.1 and Remark 2.3.3]{Knutson.Miller:annals}) uniquely picks out a polynomial representative for 
$[{\mathcal O}_X]\in K^{\circ}_S (GL_n/B)$.  The class $[{\mathcal O}_{\overline{\pi^{-1}(X)}}]_{S \times T}$ is computable as the multigraded $K$-polynomial of the ideal of $\overline{\pi^{-1}(X)}$, where the multigrading arises from the $S \times T$-action on ${\rm Mat}_{n\times n}$.

To apply this in the case of $X={\mathcal Y}_\pi$, we use the set-theoretic description of the orbit closures from \cite{Wyser-13b} to
deduce set-theoretically correct equations for $\pi^{-1}(X)\subset GL_{n}$.  These equations are scheme-theoretically
correct for the {\bf matrix $K$-orbit} $\overline{\pi^{-1}(X)}\subset M_{n\times n}$ provided they generate a prime ideal. We have computationally verified that this is indeed the
case for $n=3,4$.  The $K$-theoretic representatives of Tables \ref{tab:type-a-o3-k} and \ref{tab:type-a-o4-k} are the aforementioned $K$-polynomials, computed as numerators of multigraded Hilbert series.  All computations were carried out using {\tt Macaulay 2}.

Empirically, one notices that after the substitution $x_i\mapsto 1-x_i$, the representatives computed this way alternate in sign by degree.  When the $K$-orbits have rational singularities, this is expected; see the comments at the end of Section~4.  However, $O_n$-orbit closures on $GL_n/B$ do not have rational singularities in general.  Thus we are not aware of an explanation of this apparent alternation in sign, if in fact it holds in general.

\begin{figure}[h!]
	\centering
	\includegraphics[scale=0.5]{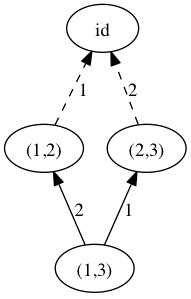}
	\caption{Labelled Hasse diagram for weak order of $(GL_3,O_3)$}\label{fig:type-a-orthogonal-1}
\end{figure}

\begin{table}[h]

	\begin{tabular}{|l|l|}
		\hline
		Involution $\pi$ & $K$-polynomial for matrix $K$-orbit \\ \hline
		$(1,3)$ & $(1-x_1^2)(1-x_1x_2)$ \\ \hline
		$(1,2)$ &  $1-x_1^2$ \\ \hline
		$(2,3)$ & $(1-x_1x_2)(1+x_1x_2)$ \\ \hline
		id & $1$ \\ 
		\hline
	\end{tabular}
	\caption{$K$-theoretic polynomial representatives for $(GL_3,O_3)$}\label{tab:type-a-o3-k}
\end{table}

\begin{table}[h]

	\begin{tabular}{|l|l|}
		\hline
		Involution $\pi$ & $K$-polynomial for matrix $K$-orbit \\ \hline
		$(1,4)(2,3)$ & $(1-x_1^2)(1-x_2^2)(1-x_1x_2)(1-x_1x_3)$ \\ \hline
		$(1,3)(2,4)$ & $(1-x_1^2)(1-x_2^2)(1-x_1x_2)$ \\ \hline
		$(1,4)$ & $(1-x_1^2)(1-x_1x_2)(1-x_1x_3)$ \\ \hline
		$(1,2)(3,4)$ & $(1-x_1^2)(1-x_1x_2x_3)(1+x_1x_2x_3)$ \\ \hline
		$(1,3)$ & $(1-x_1^2)(1-x_1x_2)$ \\ \hline
		$(2,4)$ & $(1-x_1x_2)(1+x_1x_2-x_1^2x_2x_3-x_1x_2^2x_3)$ \\ \hline
		$(1,2)$ & $1-x_1^2$ \\ \hline
		$(3,4)$ & $(1-x_1x_2x_3)(1+x_1x_2x_3)$ \\ \hline
		$(2,3)$ & $(1-x_1x_2)(1+x_1x_2)$ \\ \hline
		id & $1$ \\ 
		\hline
	\end{tabular}
	\caption{$K$-theoretic polynomial representatives for $(GL_4,O_4)$}\label{tab:type-a-o4-k}
\end{table}

\subsection{Demazure operators in $K$-theory}\label{sec:div-diff-probs}
Here we give a standard explanation of 
why Demazure operators are valid for $K$-theoretic computations for the pair $(GL_{2n},Sp_{2n})$. We also explain why we cannot use them to obtain representatives for $(GL_n,O_n)$. 

A $K$-orbit closure is {\bf multiplicity-free} if every saturated chain in weak order connecting it to the unique maximal orbit consists only of solid edges; cf. Section~1.2.
In particular, all orbit closures for $(GL_{2n},Sp_{2n})$ are of this type, since the weak order graphs for that pair contain no dashed edges at all. The following result is part of \cite[Theorem~6]{Brion-01}:

\begin{Theorem}[\cite{Brion-01}]\label{thm:brion}
 If $Y$ is any multiplicity-free $K$-orbit closure on $G/B$, then $Y$ has rational singularities.
\end{Theorem}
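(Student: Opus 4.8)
The plan is to argue by descending induction along the weak order, equivalently by induction on $\dim G/B - \dim Y$. The base case is $Y = G/B$, which is smooth. Suppose then that $Y$ is multiplicity-free with $Y \neq G/B$. Since $Y$ is not the maximal orbit closure there is a simple root $\alpha$ with $Y \prec s_\alpha \cdot Y =: Y'$, and multiplicity-freeness of $Y$ forces this edge -- the first step of a saturated chain to the maximal orbit -- to be solid; in particular $\dim Y' = \dim Y + 1$. First I would check that $Y'$ is again multiplicity-free: any saturated chain from $Y'$ to the maximal orbit, with the edge $Y \prec Y'$ prepended, is a saturated chain from $Y$ to the maximal orbit, hence is all solid, so its $Y'$-to-top portion is too. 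As $\dim Y' > \dim Y$, the inductive hypothesis gives that $Y'$ has rational singularities.

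Next set $Z := \pi_\alpha(Y) = \pi_\alpha(Y')$. Since $Y' = \pi_\alpha^{-1}(Z)$ is a union of fibres of the Zariski-locally-trivial $\mathbb{P}^1$-bundle $\pi_\alpha : G/B \to G/P_\alpha$, the map $p := \pi_\alpha|_{Y'} : Y' \to Z$ is itself a Zariski-locally-trivial $\mathbb{P}^1$-bundle. Rational singularities pass in both directions along such a smooth surjective morphism -- downward by base-changing a resolution of $Z$, upward because $Rp_*\mathcal{O}_{Y'} = \mathcal{O}_Z$, so composing a resolution of $Y'$ with $p$ exhibits $\mathcal{O}_Z$ as $R(\cdot)_*\mathcal{O}$ of a variety with rational singularities (Kov\'acs's criterion) -- hence $Z$ has rational singularities. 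Because the edge $Y \prec Y'$ is solid, $q := \pi_\alpha|_Y : Y \to Z$ is proper and birational, and $Y$ is a prime divisor in $Y'$ meeting the generic fibre of $p$ in a single reduced point; using $\mathrm{Pic}(Y') = \mathrm{Pic}(Z) \oplus \mathbb{Z}$ this shows $Y$ is a Cartier divisor in $Y'$, with $\mathcal{O}_{Y'}(Y) \cong \mathcal{O}_p(1) \otimes p^*\mathcal{L}$ for some $\mathcal{L} \in \mathrm{Pic}(Z)$.

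The crux is to promote the rational singularities of $Z$ and of $Y'$ to $Y$. I would fix a resolution $g : \widetilde{Z} \to Z$, base change to the $\mathbb{P}^1$-bundle $p' : Y'' := Y' \times_Z \widetilde{Z} \to \widetilde{Z}$ (smooth, since $\widetilde{Z}$ is), and let $\widetilde{Y} \subset Y''$ be the strict transform of $Y$, an effective Cartier divisor in the smooth variety $Y''$ meeting the generic fibre of $p'$ in one point, so $\mathcal{O}_{Y''}(\widetilde{Y}) \cong \mathcal{O}_{p'}(1) \otimes (p')^*\mathcal{M}$. Pushing $0 \to \mathcal{O}_{Y''}(-\widetilde{Y}) \to \mathcal{O}_{Y''} \to \mathcal{O}_{\widetilde{Y}} \to 0$ down along $p'$ and using that $H^\bullet(\mathbb{P}^1, \mathcal{O})$ is $k$ in degree $0$ and $H^\bullet(\mathbb{P}^1, \mathcal{O}(-1)) = 0$ gives $Rp'_*\mathcal{O}_{\widetilde{Y}} = \mathcal{O}_{\widetilde{Z}}$; the analogous computation with $\mathcal{O}(-1)$ and $\mathcal{O}(-2)$ (and fibrewise Serre duality) controls $R(p'|_{\widetilde{Y}})_*\omega_{\widetilde{Y}}$. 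Feeding these into Grauert--Riemenschneider vanishing for a resolution $\widehat{Y} \to \widetilde{Y}$ -- together with $R(\widehat{Y} \to \widetilde{Z})_*\mathcal{O}_{\widehat{Y}} = \mathcal{O}_{\widetilde{Z}}$ and $R(\widehat{Y} \to \widetilde{Z})_*\omega_{\widehat{Y}} = \omega_{\widetilde{Z}}$ (valid since $\widetilde{Z}$ is smooth) and with the fact that $Y'$ has rational singularities, so that $Y'' \to Y'$ is a rational resolution -- one deduces $R(\widehat{Y} \to \widetilde{Y})_*\mathcal{O}_{\widehat{Y}} = \mathcal{O}_{\widetilde{Y}}$ and, descending along the proper birational maps $\widetilde{Y} \to Y$ and $Y'' \to Y'$, that $R(\widehat{Y} \to Y)_*\mathcal{O}_{\widehat{Y}} = \mathcal{O}_Y$; Cohen--Macaulayness of $Y$ is automatic, being a Cartier divisor in the Cohen--Macaulay variety $Y'$.

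I expect this last paragraph to be the real difficulty. A proper birational morphism $Y \to Z$ onto a variety with rational singularities does \emph{not} by itself force $Y$ to have rational singularities -- $Y$ could even be non-normal -- so one must genuinely use that $Y$ is a Cartier divisor inside the $\mathbb{P}^1$-bundle $Y'$ and that $Y'$ itself, not merely its base $Z$, has rational singularities; making the cohomological bookkeeping on $Y''$ and its divisor $\widetilde{Y}$ close up, so that both the $\mathcal{O}$- and the $\omega$-direct images come out to be exactly the expected sheaves, is the delicate part. As an alternative route, one could reduce to positive characteristic: the multiplicity-free orbit closures are compatibly split in a Frobenius splitting of $G/B$, and compatibly split subvarieties in characteristic $p$ have rational singularities after reduction to characteristic $0$ by standard arguments.
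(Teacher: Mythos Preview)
The paper does not contain a proof of this statement: it is quoted verbatim as ``part of \cite[Theorem~6]{Brion-01}'' and used as a black box in Section~\ref{sec:div-diff-probs}. So there is no proof in the paper to compare your attempt against.

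That said, your inductive framework --- descending induction along the weak order, using that $Y' = \pi_\alpha^{-1}(\pi_\alpha(Y))$ is a $\mathbb{P}^1$-bundle over $Z=\pi_\alpha(Y)$, and transferring rational singularities from $Y'$ down to $Z$ --- is exactly the skeleton of Brion's original argument. Where your sketch becomes genuinely incomplete is in the last two paragraphs, and you are right to flag this. Two specific points:
\begin{itemize}
\item The assertion that $Y$ is a \emph{Cartier} divisor in $Y'$ is not justified by the Picard group computation you cite. At this stage of the induction $Y'$ is only known to have rational singularities, not to be locally factorial, so a prime Weil divisor need not be Cartier. Brion handles this differently: he proves normality of $Y$ simultaneously (it is part of the same Theorem~6), and the argument for $R\pi_*\mathcal{O}_Y$ does not proceed via an exact sequence on $Y'$ in the way you propose.
\item Your own caveat is accurate: a proper birational map $Y \to Z$ with $Z$ having rational singularities does not force $Y$ to have them, and the ``cohomological bookkeeping'' on the base-changed $\mathbb{P}^1$-bundle that you outline is left as a sketch. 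Closing this up is precisely the content of Brion's proof; what you have written is a plausible plan of attack rather than a proof.
\end{itemize}
The Frobenius-splitting alternative you mention at the end is also in the spirit of the literature, but again would require substantial work to make precise.
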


Consider any two $Sp_{2n}$-orbit closures $Y$ and $Y'$ on $GL_{2n}/B$, with $Y'$ covering $Y$ in the weak order.
They are connected by a solid edge, say with label $i$. As explained in Section \ref{sec:weak-orders}, this solid
edge indicates that the map $\pi: Y \rightarrow \pi(Y)$ (the
restriction of the natural map $\pi: G/B \rightarrow G/P_i$ to $Y$) is birational.  Now, by Theorem \ref{thm:brion},
both $Y$ and $Y'$ have rational singularities.  Thus $\pi(Y)$ has the same properties, since $Y'$ is a
$\mathbb{P}^1$-bundle over $\pi(Y)$.  In particular, $\pi(Y)$ is normal.  Zariski's Main Theorem \cite[Ch. III, \S 11]{Hartshorne} then implies that
\[\pi_* \mathcal{O}_Y \cong \mathcal{O}_{\pi(Y)}.\]  
On the other hand, if we consider the diagram
\[
\begin{tikzcd}
\widetilde{Y} \arrow{d}{r} \arrow{dr}{q = \pi \circ r} \\
Y \arrow{r}{\pi} & \pi(Y)
\end{tikzcd}
\]
(where $\widetilde{Y} \stackrel{r}{\longrightarrow} Y$ is a resolution of singularities for $Y$), then a straightforward
argument using the Leray spectral sequence
\[ R^i \pi_* (R^j r_* \mathcal{O}_{\widetilde{Y}}) \Rightarrow R^{i+j} q_* \mathcal{O}_{\widetilde{Y}} \]
implies that $R^i \pi_* \mathcal{O}_Y = 0$.  Thus in $K$-theory, we have that
\[ \pi_* [\mathcal{O}_Y] := [\pi_* \mathcal{O}_Y] + \displaystyle\sum_{i \geq 1} [R^i \pi_* \mathcal{O}_Y] = [\mathcal{O}_{\pi(Y)}]. \]
Then it follows that
\[ [\mathcal{O}_{Y'}] = [\mathcal{O}_{\pi^{-1}(\pi(Y))}] = \pi^*[\mathcal{O}_{\pi(Y)}] = (\pi^* \circ \pi_*)[\mathcal{O}_Y] = D_i [\mathcal{O}_Y], \]
since $\pi^* \circ \pi_*$ is precisely the operator $D_i$.

The above argument cannot always be applied
for the pair $(GL_n,O_n)$. There are two situations we discuss. The first occurs when instead of being connected by a solid edge, $Y$ and $Y'$ are connected by a dashed edge.  The second occurs when $Y$ and $Y'$ are connected by a solid edge, but $Y'$ is not normal.  In the first case, $\pi: Y \rightarrow \pi(Y)$ is no longer birational (rather, it has degree $2$), while in the second case, $\pi$ \textit{is} birational, but $\pi(Y)$ is now not normal.  In either event, Zariski's Main Theorem no longer applies, so 
we do not expect that $\pi_* \mathcal{O}_Y \cong \mathcal{O}_{\pi(Y)}$.  Actually, in both cases there is an injection
\[ \mathcal{O}_{\pi(Y)} \hookrightarrow \pi_* \mathcal{O}_Y,\]
but this map is not necessarily an isomorphism. Suppose the map's cokernel is $\mathcal{C}$.  Then in $K$-theory, 
\[[\pi_* \mathcal{O}_Y] = [\mathcal{O}_{\pi(Y)}] + [\mathcal{C}],\]
so when computing $\pi_*[\mathcal{O}_Y]$ there may be nontrivial corrections.

We now show that these correction terms are in fact nontrivial. We give an example of each, both for $(GL_4,O_4)$.  

\begin{Example}
First, consider the codimension-$1$ $O_4$-orbit closure $Y$ on $GL_4/B$ corresponding to the involution $(1,2)$.  It is connected to $Y'=G/B$ (the closure of the dense orbit, which corresponds to the identity) by a dashed edge with label $1$.
The ($S$-equivariant and ordinary) $K$-class of $Y$ is represented by $1-x_1^2$ (cf. Table \ref{tab:type-a-o4-k}). Since $Y'=GL_4/B$, its $K$-class is
represented by $1$.  However, when we apply $D_1$ (or, if we like, $\frac{1}{2} D_1$) to the class represented by $1-x_1^2$,
we get $1+x_1x_2$ (resp. $\frac{1}{2}(1+x_1x_2)$). One checks that neither $1+x_1x_2$ nor
$\frac{1}{2}(1+x_1x_2)$ equals $1$ (modulo $I$).  So applying the Demazure operator to $[\caO_{Y}]$ does not give $[\caO_{Y'}]$.
\qed
\end{Example}

\begin{Example}
Now suppose the edge is solid, but $Y'$ is not normal. For example, let $Y$ be the $O_4$-orbit closure on $GL_4/B$ corresponding to the involution $(1,3)(2,4)$, and let $Y'$ be the orbit closure corresponding to $(1,2)(3,4)$.  Then $Y$ is connected to $Y'$ by a solid edge with label $2$, and $Y'$ is not normal along the orbit closure corresponding to $(1,4)$ \cite[Corollary 4.4.4]{Perrin}.  (In fact, $Y'$ is reducible along $(1,4)$.)  Here we have (cf. Table \ref{tab:type-a-o4-k})
\[ [\mathcal{O}_Y] = (1-x_1^2)(1-x_2^2)(1-x_1x_2), \]
while
\begin{equation}
\label{eqn:simonecomp}
[\mathcal{O}_{Y'}] = (1-x_1^2)(1-x_1x_2x_3)(1+x_1x_2x_3). 
\end{equation}

Yet $D_2([\mathcal{O}_Y]) = (1-x_1^2)(1+x_2x_3-x_1x_2^2x_3-x_1x_2x_3^2)$, 
which is not equal (modulo $I$) to the representative (\ref{eqn:simonecomp}) of $[\mathcal{O}_{Y'}]$.  
So again, we see that $D_2([\caO_{Y}]) \neq [\caO_{Y'}]$.
\qed
\end{Example}

\section{Proofs of Theorems~\ref{thm:self-consistent} and~\ref{thm:stability}}
\subsection{Proof of Theorem~\ref{thm:self-consistent}} 
For any $n$, let 
\[{\mathcal L}_n \subseteq {\mathbb Q}[x_1,\ldots,x_n]\] be the span of monomials 
$x_1^{\alpha_1}\cdots x_n^{\alpha_n}$ where $\alpha_i\leq n-i$.  It is well-known that the single Schubert
polynomials $\{\Schub_w(X) \mid w \in S_n\}$ form a $\Z$-linear basis for ${\mathcal L}_n$.

Clearly $\Upsilon_{w_0;(GL_n,O_n)}$ is an element of ${\mathcal L}_n$.  Thus $\Upsilon_{w_0;(GL_n,O_n)}$
is a $\Z$-linear combination of single Schubert polynomials whose indexing permutations lie in $S_n$.  Since divided
difference operators send Schubert polynomials in $S_n$ to other Schubert polynomials in $S_n$, given an involution
$\pi \in S_n$, any polynomial representative for $[\caY_{\pi}]$ that one obtains via a sequence of divided
difference operators is again a sum of single Schubert polynomials indexed by elements of $S_n$.  Any two such
representatives certainly represent $[\caY_{\pi}]$.  But there can be only one polynomial representative for
$[\caY_{\pi}]$ which is a
$\Z$-linear combination of Schubert polynomials from $S_n$ --- namely, the one which uses precisely the
Schubert polynomials which correspond to the Schubert (cohomology) \textit{classes} $[X^w]$ appearing in the unique
expression for $[\caY_{\pi}]$ in the basis of Schubert classes.  Thus $\Upsilon_{\pi;(GL_n,O_n)}$ is well-defined, as
claimed.  The same argument applies verbatim to the pair $(GL_{2n},Sp_{2n})$.\qed

\subsection{Proof of Theorem~\ref{thm:stability}}
We give a detailed proof for the $(GL_{2n},Sp_{2n})$, and a sketch of the (similar) argument for the $(GL_n,O_n)$ case.
For fixed $N \geq n$, let $\iota_{\text{fpf}}$ be the map
$\caI_{\text{fpf}}(2n) \hookrightarrow \caI_{\text{fpf}}(2N)$ defined in Section
\ref{sec:intro-stability}.  Given $\pi \in \caI_{\text{fpf}}(2n)$ and any
weak-order path from $w_0$ to $\pi$, there is a
corresponding path from $\iota_{\text{fpf}}(w_0)$ to $\iota_{\text{fpf}}(\pi)$ in $\caI_{\text{fpf}}(2N)$ with
precisely the same edge labels.  Thus it suffices to prove that
\[ \Upsilon_{\iota_{\text{fpf}}(w_0); (GL_{2N},Sp_{2N})} = \Upsilon_{w_0; (GL_{2n},Sp_{2n})}. \]

Further, by induction, it is enough to show that for $N \geq 2$,
\[ \Upsilon_{\iota_{\text{fpf}}(w_0); (GL_{2N},Sp_{2N})} = \Upsilon_{w_0; (GL_{2(N-1)},Sp_{2(N-1)})}. \]

\begin{Lemma}\label{lem:symplectic-chain}
Let $\iota_{\text{fpf}}: \caI_{\text{fpf}}(2N-2) \hookrightarrow \caI_{\text{fpf}}(2N)$ be as in Section
\ref{sec:intro-stability}.  In $\caI_{\text{fpf}}(2N)$, there is a
path from $w_0$ to $\iota_{\text{fpf}}(w_0)$ with edges labelled $1,2,\hdots,2N-2$ (starting at the bottom and moving up).
\end{Lemma}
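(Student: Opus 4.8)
The plan is to write down the path by hand. Throughout, $w_0$ denotes the longest element of $S_{2N}$, i.e.\ the fixed-point-free involution $w_0(i)=2N+1-i$; inside the expression $\iota_{\text{fpf}}(w_0)$ the symbol $w_0$ instead means the longest element of $S_{2N-2}$. For $0\le k\le 2N-2$ put $\sigma_k:=s_k s_{k-1}\cdots s_1\in S_{2N}$ (so $\sigma_0=\id$) and define $\pi_k:=\sigma_k\,w_0\,\sigma_k^{-1}$. Since $\sigma_k=s_k\sigma_{k-1}$ we get $\pi_k=s_k\pi_{k-1}s_k$, so $w_0=\pi_0,\pi_1,\dots,\pi_{2N-2}$ is the candidate chain and the $k$-th step is conjugation by $s_k$. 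Two things must then be checked: (i) each step $\pi_{k-1}\to\pi_k$ is a genuine solid cover in the weak order on $\caI_{\text{fpf}}(2N)$, so that the edge is labelled $k$; and (ii) $\pi_{2N-2}=\iota_{\text{fpf}}(w_0)$.

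For (i), recall from case (b$'$) of Section~\ref{sec:weak-orders} that the move $s_k\cdot\pi_{k-1}$ equals $s_k\pi_{k-1}s_k$ and carries a solid edge exactly when $\ell(s_k\pi_{k-1})<\ell(\pi_{k-1})$ and $(k,k+1)$ is not a $2$-cycle of $\pi_{k-1}$. Because $\pi_{k-1}$ is an involution, the first condition is equivalent to $\pi_{k-1}(k)>\pi_{k-1}(k+1)$. I would deduce both conditions from the single observation that $\pi_{k-1}(k)=2N$: indeed $\sigma_{k-1}$ moves only the letters $\{1,\dots,k\}$ and sends $1\mapsto k$, hence $\pi_{k-1}(k)=\sigma_{k-1}\!\bigl(w_0(\sigma_{k-1}^{-1}(k))\bigr)=\sigma_{k-1}(w_0(1))=\sigma_{k-1}(2N)=2N$. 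Being the maximum possible value of a permutation in $S_{2N}$, this is automatically strictly larger than $\pi_{k-1}(k+1)$ and different from $k+1$. Thus every step is a solid cover relation and the chain realizes the edge-label sequence $1,2,\dots,2N-2$ starting from $w_0$ and moving up.

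For (ii), I would compute $\pi_{2N-2}$ directly. Here $\sigma_{2N-2}=s_{2N-2}\cdots s_1$ is the cycle that fixes $2N$, sends $1\mapsto 2N-1$, and sends $j\mapsto j-1$ for $2\le j\le 2N-1$; its inverse sends $2N-1\mapsto 1$, $i\mapsto i+1$ for $1\le i\le 2N-2$, and fixes $2N$. Plugging into $\pi_{2N-2}(i)=\sigma_{2N-2}\!\bigl(w_0(\sigma_{2N-2}^{-1}(i))\bigr)$ and splitting into the cases $i\le 2N-2$, $i=2N-1$, $i=2N$ gives $\pi_{2N-2}(i)=2N-1-i$ for $i\le 2N-2$, while $2N-1$ and $2N$ are swapped. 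In other words $\pi_{2N-2}$ is the longest element of $S_{2N-2}$ (acting on $\{1,\dots,2N-2\}$) with the transposition $(2N-1,2N)$ adjoined --- exactly $\iota_{\text{fpf}}$ applied to the longest element of $S_{2N-2}$, as defined in Section~\ref{sec:intro-stability}. This proves the lemma.

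There is no serious obstacle here; the content is entirely the bookkeeping of conjugating $w_0$ by the ``rotation'' permutations $\sigma_k$. The one place a naive approach would splinter into cases is the behaviour of $\sigma_{k-1}(w_0(k+1))=\sigma_{k-1}(2N-k)$ as $k$ passes $N$ --- but the proof never needs that entry, since tracking only $\pi_{k-1}(k)=2N$ already supplies both hypotheses of case (b$'$). I would therefore organize the write-up around that single observation to keep it short.
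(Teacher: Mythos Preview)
Your proof is correct and follows essentially the same approach as the paper's: both construct the chain explicitly by successive conjugation of $w_0$ by $s_1,\dots,s_{2N-2}$ and verify that the endpoint is $\iota_{\text{fpf}}(w_0)$. Your organization via the closed form $\pi_k=\sigma_k w_0\sigma_k^{-1}$ and the single observation $\pi_{k-1}(k)=2N$ is in fact a bit more explicit than the paper's argument, which tracks the cycle structure directly and leaves the verification of the descent condition (case (b$'$)) implicit.
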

\begin{proof}
It is clear that the conjugation action (b') which defines the weak order (cf. Section \ref{sec:weak-orders}) of the
simple reflection $s_i$ on
any fixed point-free involution simply interchanges the positions of $i$ and $i+1$
within its cycle notation.  (If $i$ and $i+1$ are interchanged by the involution and appear within the same $2$-cycle,
then $s_i \cdot \pi$ is simply $\pi$, as in (a').)  So starting with
$w_0 \in S_{2N}$, where each $i=1,\hdots,N$ is paired with $2N+1-i$ in the cycle notation, we simply show that
consecutively acting by $s_1,\hdots,s_{2N-2}$ in this way gives the fixed point-free involution whose cycle notation
pairs $i$ with $2N-1-i$ for $i=1,\hdots,N-1$ (and thus, by necessity, pairs $2N-1$ with $2N$).

For such an $i$, note that the index $i$ is only moved by the reflections $s_{i-1}$, and then $s_i$.  Whatever the
action of
$s_{i-1}$, the action of $s_i$ must result in $i$ being paired with whatever $i+1$ is initially paired with, namely
$2N-i$, since when $s_i$ acts, neither $i+1$ nor its initial partner $2N-i$ have been affected.
The index $i$ is unaffected thereafter, but its new partner becomes $2N-1-i$ upon the action of $s_{2N-1-i}$.  Beyond
this point, neither $i$ nor $2N-1-i$ are affected again.  Thus $i$ is paired with $2N-1-i$, as claimed.
\end{proof}

Recall the polynomial
\[ \Upsilon_{w_0; (GL_{2N},Sp_{2N})} = \displaystyle\prod_{1 \leq i < j \leq 2N-i} (x_i+x_j) \]
from displayed equation \eqref{eqn:GS}, our chosen representative for the equivariant cohomology class of the closed
orbit $\frX_{w_0}$.  Call this polynomial $\Upsilon$ for short, and consider the effect of applying to it the divided
difference operators $\partial_1,\hdots,\partial_{2N-2}$, in that order.  Note that there are $N-1$ factors $x_i+x_j$
($i<j$) of $\Upsilon$ for which $i+j=2N$, namely $x_1+x_{2N-1},\hdots,x_{N-1}+x_{N+1}$.  There are also $N-1$ factors
$x_i+x_j$ ($i<j$) for which $i+j=2N-1$, those being $x_1+x_{2N-2},\hdots,x_{N-1}+x_N$.  The following establishes
Theorem~\ref{thm:stability} in the symplectic case:
\begin{Lemma}\label{lem:symplectic-divided-difference-sequence}
  When computing \[(\partial_{2N-2} \circ \partial_{2N-3} \circ \hdots \circ \partial_2 \circ \partial_1)(\Upsilon),\]
each of the first $N-1$ operators applied removes a linear factor $x_i+x_j$ with $i+j=2N$.  They are removed
in the order $x_1+x_{2N-1},\hdots,x_{N-1}+x_{N+1}$.  Each of the next $N-1$ operators applied 
removes a linear factor $x_i+x_j$ with $i+j=2N-1$.  They are removed in the order $x_{N-1}+x_N,\hdots,x_1+x_{2N-2}$.

  As a result, we have
  \[ \Upsilon_{\iota_{\text{fpf}}(w_0); (GL_{2N},Sp_{2N})} = \displaystyle\prod_{1 \leq i < j \leq 2N-2-i} (x_i+x_j) = 
    \Upsilon_{w_0; (GL_{2N-2},Sp_{2N-2})}.
  \]
\end{Lemma}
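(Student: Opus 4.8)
The plan is to track the action of the operators $\partial_1,\partial_2,\dots,\partial_{2N-2}$ on $\Upsilon:=\Upsilon_{w_0;(GL_{2N},Sp_{2N})}$ one step at a time, proving by induction that every intermediate polynomial is a product of pairwise distinct linear forms $x_i+x_j$ indexed by an explicitly described set of pairs $(i,j)$, and that each operator deletes exactly one of these factors. The only algebraic input is the Leibniz rule $\partial_k(fg)=(\partial_k f)\,g+(s_kf)(\partial_k g)$ together with the fact that $\partial_k$ annihilates anything symmetric in $x_k,x_{k+1}$. These combine into the basic move: if $P=(x_k+x_m)\cdot R$ with $m\notin\{k,k+1\}$ and $R$ symmetric in $x_k,x_{k+1}$, then $\partial_k P=R$, since $\partial_k(x_k+x_m)=1$ and $\partial_k R=0$. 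Thus the whole lemma reduces to exhibiting, at each of the $2N-2$ steps, a factorization of the current polynomial of exactly this shape whose distinguished factor is the one named in the statement.

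To run the induction I would write $P_k$ for the polynomial obtained after applying $\partial_1,\dots,\partial_{k-1}$ (so $P_1=\Upsilon$) and claim: for $1\le k\le N-1$, $P_k$ is the product of the $x_i+x_j$ over all $i<j$ with $i+j\le 2N$ \emph{except} the pairs $(i,2N-i)$ with $1\le i\le k-1$; and for $N\le k\le 2N-1$, $P_k$ is the product over all $i<j$ with $i+j\le 2N-1$ except the pairs $(a,2N-1-a)$ with $2N-k\le a\le N-1$. The two descriptions are consistent at $k=N$, since deleting all pairs with $i+j=2N$ from the first is exactly the passage to the second, and there $P_N=\prod_{i<j,\ i+j\le 2N-1}(x_i+x_j)$. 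Both claims are proved by a single induction on $k$ from $1$ to $2N-2$.

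The crux, and the step I expect to demand the most care even though it is elementary bookkeeping, is verifying that $P_k$ actually admits the asserted $s_k$-factorization. Concretely, one lists the factors of $P_k$ that involve $x_k$ or $x_{k+1}$ and pairs them under $x_k\leftrightarrow x_{k+1}$. When $k\le N-1$, the factors touching $x_k$ are the $x_i+x_k$ with $1\le i\le k-1$ and the $x_k+x_j$ with $k+1\le j\le 2N-k$; under $s_k$ each of these has its image present in $P_k$ \emph{except} $x_k+x_{2N-k}$, whose potential partner $x_{k+1}+x_{2N-k}$ violates $i+j\le 2N$; meanwhile $x_k+x_{k+1}$ is self-symmetric and every factor touching only $x_{k+1}$ is matched with one touching $x_k$. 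When $k\ge N$, the only factors touching $x_k$ are the $x_i+x_k$ with $1\le i\le 2N-1-k$, each with its image present except $x_{2N-1-k}+x_k$ (partner $x_{2N-1-k}+x_{k+1}$ violates $i+j\le 2N-1$), and $x_k+x_{k+1}$ is not a factor at all. In both cases this gives $P_k=(\text{orphan})\cdot R_k$ with $R_k$ symmetric in $x_k,x_{k+1}$, so $\partial_k P_k=R_k=P_{k+1}$; this advances the induction and records that step $k$ removes $x_k+x_{2N-k}$ for $k\le N-1$ and $x_{2N-1-k}+x_k$ for $k\ge N$.

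Reading off the conclusion: the first $N-1$ operators remove $x_1+x_{2N-1},\dots,x_{N-1}+x_{N+1}$ (all pairs of sum $2N$, in that order), the next $N-1$ remove $x_{N-1}+x_N,\dots,x_1+x_{2N-2}$ (all pairs of sum $2N-1$, in that order), and so $P_{2N-1}=\prod_{1\le i<j\le 2N-2-i}(x_i+x_j)=\Upsilon_{w_0;(GL_{2N-2},Sp_{2N-2})}$. Combined with Lemma~\ref{lem:symplectic-chain}, which supplies a weak-order path from $w_0$ to $\iota_{\text{fpf}}(w_0)$ carrying exactly the labels $1,\dots,2N-2$, and with the path-independence of Theorem~\ref{thm:self-consistent}, this yields $\Upsilon_{\iota_{\text{fpf}}(w_0);(GL_{2N},Sp_{2N})}=\Upsilon_{w_0;(GL_{2N-2},Sp_{2N-2})}$ and hence the symplectic case of Theorem~\ref{thm:stability}. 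The case $N=2$, where each phase is a single operator, is a convenient sanity check.
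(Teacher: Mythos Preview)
Your proof is correct and follows essentially the same route as the paper's: both argue by induction on $k$, showing at each step that the current polynomial factors as the named linear form times something symmetric in $x_k,x_{k+1}$, so that $\partial_k$ simply strips that factor. Your write-up is more explicit in the bookkeeping (you spell out the full index set for $P_k$ in each phase and verify the $s_k$-pairing of factors directly), whereas the paper just asserts the required symmetry and leaves the second phase to the reader; but the underlying argument is the same.
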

\begin{proof}
 We prove the first statement, on the application of the first $N-1$ operators, by induction.  (The proof of the second
statement, on application of the last $N-1$ operators, is exactly the same.)  First, note that
$\Upsilon^{(1)} := \Upsilon/(x_1+x_{2N-1})$ is symmetric in the variables $x_1$ and $x_2$.  Thus 
  \[ \partial_1(\Upsilon) = \dfrac{\Upsilon-s_1(\Upsilon)}{x_1-x_2} = 
      \dfrac{(x_1+x_{2N-1})\Upsilon^{(1)} - (x_2+x_{2N-1})\Upsilon^{(1)}}{x_1-x_2} = \Upsilon^{(1)}. \]
Now, suppose for $i < N-1$, after applying $\partial_i \circ \hdots \circ \partial_1$, we have
$\Upsilon^{(i)} := \Upsilon/\prod_{j=1}^i (x_j+x_{2N-j})$. 
Then $\Upsilon^{(i+1)} := \Upsilon/(x_{i+1}+x_{2N-1-i})$ is necessarily symmetric in the variables $x_{i+1}$ and $x_{i+2}$.
This implies that 
  \[ \partial_{i+1}(\Upsilon^{(i)}) = \dfrac{\Upsilon^{(i)}-s_{i+1}(\Upsilon^{(i)})}{x_{i+1}-x_{i+2}} = 
      \dfrac{(x_{i+1}+x_{2N-1-i})\Upsilon^{(i+1)} - (x_{i+2}+x_{2N-1-i})\Upsilon^{(i+1)}}{x_{i+1}-x_{i+2}} = 
\Upsilon^{(i+1)}. \]
By induction, the factors $x_1+x_{2N-1},\hdots,x_{N-1}+x_{N+1}$ are removed in the order claimed.
\end{proof}

The argument for the orthogonal case is very similar. We abbreviate it, leaving the straightforward modifications to the reader. By an identical argument to the above, it suffices to show that when
$w_0 \in \caI(N)$ is embedded into $\caI(N+1)$ as $\iota(w_0)$ ($\iota$ the embedding of $\caI(N)$ into
$\caI(N+1)$ defined in Section \ref{sec:intro-stability}), we have 
$\Upsilon_{\iota(w_0); (GL_{N+1},O_{N+1})} = \Upsilon_{w_0; (GL_N, O_N)}$.

Similar to the proof of Lemma \ref{lem:symplectic-chain}, one shows that in $\caI(N+1)$,
there is a path from $w_0$ to $\iota(w_0)$ with edge labels
$\lfloor (N+1)/2 \rfloor, \lfloor (N+1)/2 \rfloor + 1,\hdots,N$ when we start from the bottom of the weak order and
move up.  If $N+1$ is odd, all edges on this path are solid.  If $N+1$ is even, then the first edge (that
labelled $(N+1)/2$) is dashed, while the rest are solid.  

An analogue of Lemma \ref{lem:symplectic-divided-difference-sequence} shows
that the sequence of divided difference operators corresponding to this chain removes linear factors from 
$\Upsilon_{w_0; (GL_{N+1},O_{N+1})}$ predictably, leading to  $\Upsilon_{w_0; (GL_N,O_N)}$.
In the case where $N+1$ is odd, the argument is identical to the proof of Lemma
\ref{lem:symplectic-divided-difference-sequence}, with each operator stripping off one linear factor $x_i+x_j$ with
$i<j$ and $i+j=N+1$, in the order $x_{N/2}+x_{N/2+1},x_{N/2-1}+x_{N/2+2},\hdots,x_1+x_N$.  If $N+1$ is even, the first operator
$\frac{1}{2} \partial_{(N+1)/2}$ removes the factor $x_{(N+1)/2} + x_{(N+1)/2} = 2x_{(N+1)/2}$, and the remaining
operators remove the factors $x_{(N+1)/2-1}+x_{(N+1)/2+1},\hdots,x_1+x_N$, in that order.

\section{Background and proofs of the $K$-theory results of Section~2}
\subsection{Notation and conventions}\label{sec:conventions}
$G$ will be the general linear group over $\C$, $B$ its Borel subgroup of upper-triangular matrices,
and $T$ its maximal torus consisting of diagonal matrices.

For the case $(G,K) = (GL_{2n},Sp_{2n})$, we realize $K$ as the subgroup of $G$ which
preserves the antisymmetric form $\left\langle \cdot,\cdot \right\rangle$ defined by
$\left\langle e_i,e_j \right\rangle = \delta_{i,2n+1-j}$ for $i<2n+1-j$.
Thus $K$ is the subgroup of $G$ fixed by the involution
\[ \theta(g) = J (g^{-1})^t J, \]
where $J$ is the $2n \times 2n$ matrix whose antidiagonal consists of $n$ many $1$'s followed by $n$ many $-1$'s (reading
northeast to southwest), with $0$'s elsewhere.

With this particular realization of $K$, $K \cap B$ is a Borel subgroup of $K$, and $S:=K \cap T$ is a maximal torus
of $K$.  $S$ consists of diagonal matrices of the form
\[ s = \text{diag}(a_1,\hdots,a_n,a_n^{-1},\hdots,a_1^{-1}). \]

It is the action of this torus $S$ on $K$-orbit closures which we consider when discussing equivariant cohomology or
equivariant $K$-theory classes.

Let $Y_1,\hdots,Y_n$ denote the standard coordinate functions on $S$ (i.e. $Y_i(s) = a_i$, in the notation above), and
let $X_1,\hdots,X_{2n}$ be the standard coordinate functions on $T$.  The representation ring $R(S)$ (resp. $R(T)$)
is isomorphic to $\oplus_{\lambda \in X(S)} \C \cdot e^{\lambda}$ (resp. $\oplus_{\lambda \in X(T)} \C \cdot e^{\lambda}$).  The restriction map
$\rho: R(T) \rightarrow R(S)$ induced by the inclusion $S \subseteq T$ is defined by
\[ \rho(e^{X_i}) = 
 \begin{cases}
  e^{Y_i} & \text{ if $i \leq n$} \\
  e^{-Y_{2n+1-i}} & \text{ if $i > n$.}
 \end{cases}
\]

As stated in \cite[pg.~128]{Brion-99} and used in \cite{Wyser-13b} (and as is easily computed directly in our examples), in fact 
\[(G/B)^S = (G/B)^T.\] 
Also, 
given our realization of $K$, the unique closed orbit $\frX_{w_0}$ contains precisely the $S$-fixed points
corresponding to mirrored permutations $w \in S_{2n}$, i.e. permutations $w$ with the property that
\[ w(2n+1-i) = 2n+1-w(i) \text{ for $i=1,\hdots,n$}. \]

Such permutations provide the standard embedding of the hyperoctahedral group into $S_{2n}$. 
Alternatively, recall the standard bijection of these permutations with signed permutations on $\{\pm 1, \hdots, \pm n\}$:  Given a mirrored permutation $w$, define the signed permutation $\sigma_w$ first on $\{1,\hdots,n\}$ by 
\[ \sigma_w(i) = 
	\begin{cases}
		w(i) & \text{ if $w(i) \leq n$} \\
		-(2n+1-w(i)) & \text{ otherwise,}
	\end{cases}
\]
then declare that $\sigma_w(-i) = -\sigma_w(i)$. Conversely, given a signed permutation $\sigma$, it embeds as $w_{\sigma} \in S_{2n}$, defined on $\{1,\hdots,n\}$ by
\[ w_{\sigma}(i) = 
	\begin{cases}
		\sigma(i) & \text{ if $\sigma(i) > 0$} \\
		2n+1-|\sigma(i)| & \text{ otherwise,}
	\end{cases}
\]
and then on $\{n+1,\hdots,2n\}$ by $w_{\sigma}(2n+1-i) = 2n+1-w_{\sigma}(i)$.

Recall that $S_{2n}$ acts on the coordinate functions $X_i$ on $T$ via
permutation of the indices, and hence on $R(T)$.  On the other hand, signed permutations of $\{\pm 1,\hdots,\pm n\}$
act on the coordinates $Y_i$ of $S$ (via permutation of the indices together with sign changes), hence also on $R(S)$.

We make the following simple observation, which is easily checked:  If $w \in S_{2n}$ is mirrored, and if $\rho$ is the
restriction map $R(T) \rightarrow R(S)$ defined above, we have
\begin{equation}\label{eqn:w-action-commutes-with-restriction}
	\rho(w(e^{X_i})) = \sigma_w(\rho(e^{X_i})).
\end{equation}

Now, we consider the case $(G,K)=(GL_n,O_n)$.  There are many similarities to the symplectic
case.  We replace the antisymmetric form by a symmetric one, defined by
\[ \left\langle e_i,e_j \right\rangle = \delta_{i,n+1-j}. \]

With this choice of realization, if $n$ is even, then all of the notations, conventions, and definitions
above for the symplectic case apply here.  (It should be noted that in this case, the closed orbit $\caY_{w_0}$ has $2$ connected components instead of one, with one component containing half of the $S$-fixed points, and the other containing the other half.  However, as we will see, this is irrelevant to our computations.)

When $n=2m+1$ is odd, there is only a slight difference in the form of the torus $S$, the notion of a signed permutation,
and the definition of the restriction map $R(T) \rightarrow R(S)$.  Indeed, in the odd case, $S$ consists of diagonal
matrices of the form
\[ s = \text{diag}(a_1,\hdots,a_m,1,a_m^{-1},\hdots,a_1^{-1}). \]

A mirrored permutation is an element $w \in S_{2m+1}$ such that $w(2m+2-i) = 2m+2-w(i)$ for
$i=1,\hdots,m$.  (Note that this forces $w(m+1) = m+1$.)  Such permutations still correspond to signed permutations of $\{\pm 1,\hdots,\pm m\}$, in the same way as in the even case.

The restriction $\rho$ is defined by $e^{X_i} \mapsto e^{Y_i}$ for $i=1,\hdots,m$, $e^{X_{m+1}} \mapsto 1$, and
$e^{X_i} \mapsto e^{-Y_{n+1-i}}$ for $i=m+2,\hdots,n$.

\subsection{Background on equivariant $K$-theory and the localization theorem}\label{sec:background-k-theory}
We now recall some standard material. Our references are \cite{Kostant-Kumar,Chriss-Ginzburg}.

$K_0^S(X)$ denotes the Grothendieck group of $S$-equivariant coherent sheaves on an $S$-variety $X$, while $K^0_S(X)$ denotes the Grothendieck group of $S$-equivariant locally free sheaves on $X$.  When $X$ is a smooth variety, such as $G/B$, these groups are isomorphic.  Tensor product of vector bundles gives $K_S^0(X)$ a natural ring structure.  We primarily consider $K_S^0(X)$.

$K_S^0(-)$ is contravariant for $S$-equivariant maps.  Letting $X$ be any $S$-variety, the
map $X \rightarrow \{ \text{pt.} \}$ gives a pullback map $K_S^0(\{ \text{pt.} \}) \rightarrow K_S^0(X)$, giving
$K_S^0(X)$ the structure of a $K_S^0(\{ \text{pt.} \})$-module.  The ring $K_S^0(\{ \text{pt.} \})$ is isomorphic to
$R(S)$, the representation ring of $S$, mentioned in Section \ref{sec:conventions}.

In our setting, when $S \subseteq T$ are the maximal tori of $K$ and $G$, respectively, defined in Section
\ref{sec:conventions}, $K_S^0(X)$ can be described as follows:
\[ K_S^0(G/B) \cong R(S) \otimes_{R(T)^W} R(T). \]
The isomorphism (from the right-hand side to the left-hand side) is given by
\[ e^{\mu} \otimes e^{\lambda} \mapsto e^{\mu} \cdot [\mathcal{L}_{\lambda}], \]
where $\mathcal{L}_{\lambda} := G \times^B \C_{\lambda}$ is the standard line bundle on $G/B$ constructed from a line on
which $B$ acts with weight $\lambda$, and where $\cdot$ denotes the aforementioned $R(S)$-module structure arising
from pullback through the map to a point.

Identifying $R(S)$ with $\C[y_1^{\pm 1},\hdots,y_n^{\pm 1}]$ and $R(T)$ with $\C[x_1^{\pm 1},\hdots,x_{2n}^{\pm 1}]$ 
allows one to recover the description of $K_S^0(G/B)$ given in Section \ref{sec:intro-k-theory}.  In particular, via 
this identification, the $\yy$-variables are classes pulled back from $K_S^0(\{ \text{pt.} \})$, while the
$\xx$-variables are the (classes of) standard line bundles on $G/B$.

The localization theorem for equivariant $K$-theory implies the following:
\begin{Theorem}\label{thm:k-theory-localization}
For $X=G/B$, the pullback map
$K^0_S(X) \rightarrow K^0_S(X^S)$
induced by the inclusion $X^S \hookrightarrow X$ is injective.
\end{Theorem}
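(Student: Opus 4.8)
The plan is to deduce this from the Thomason localization theorem in equivariant $K$-theory together with freeness of $K^0_S(G/B)$ as an $R(S)$-module. Recall from Section~\ref{sec:conventions} that $(G/B)^S = (G/B)^T$, which is the finite set $\{wB : w \in W\}$; hence $K^0_S(X^S) \cong \bigoplus_{w \in W} R(S)$ is a free $R(S)$-module of rank $|W|$, and the pullback map in question is a homomorphism of $R(S)$-modules (pullback being $K^0_S(\{\mathrm{pt}\})$-linear).

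First I would record that $K^0_S(G/B)$ is itself a free $R(S)$-module. This follows from the presentation $K^0_S(G/B) \cong R(S) \otimes_{R(T)^W} R(T)$ recalled in Section~\ref{sec:background-k-theory}, together with the classical theorem of Pittie and Steinberg that $R(T)$ is a free $R(T)^W$-module (of rank $|W|$). In particular $K^0_S(G/B)$ is torsion-free over $R(S)$, so the natural map $K^0_S(G/B) \to K^0_S(G/B) \otimes_{R(S)} \operatorname{Frac}(R(S))$ is injective; here $\operatorname{Frac}(R(S))$ makes sense because $R(S) \cong \Z[y_1^{\pm 1},\hdots,y_n^{\pm 1}]$ is an integral domain.

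Next I would invoke the localization (concentration) theorem (see \cite{Chriss-Ginzburg}): after inverting a suitable multiplicative subset of $R(S)$ — concretely the elements $1 - e^{\lambda}$ as $\lambda$ ranges over the nonzero weights of $S$ on $G/B$, or more crudely after passing all the way to $\operatorname{Frac}(R(S))$ — the pullback map
\[ K^0_S(X) \otimes_{R(S)} \operatorname{Frac}(R(S)) \longrightarrow K^0_S(X^S) \otimes_{R(S)} \operatorname{Frac}(R(S)) \]
is an isomorphism. (In the smooth case the self-intersection formula $\iota^*\iota_* = \cdot\,\lambda_{-1}(N^{\vee})$ shows $\iota^*$ inverts after localization once $\iota_*$ does.) Now consider the commuting square whose top row is the pullback $K^0_S(X) \to K^0_S(X^S)$, whose bottom row is the localized pullback, and whose vertical maps are the localization maps. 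The left vertical map is injective by torsion-freeness of $K^0_S(X)$, and the bottom map is an isomorphism; hence the composite $K^0_S(X) \to K^0_S(X^S) \otimes_{R(S)} \operatorname{Frac}(R(S))$ is injective. Since it factors through $K^0_S(X^S)$, the top map $K^0_S(X) \to K^0_S(X^S)$ is injective, as claimed.

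The only genuine content is the two inputs — the Pittie--Steinberg freeness and Thomason's localization theorem — both standard; the remainder is a diagram chase. The mild point requiring care is identifying the correct multiplicative set, equivalently checking that the fixed locus does not jump at the generic point of $\Spec R(S)$; but for $X = G/B$ with $S$ a maximal torus of $K$ this is exactly the equality $X^S = X^T$ already used above, so no difficulty arises. A self-contained alternative — building $K^0_S(G/B)$ up along the $S$-stable Bruhat stratification via the localization long exact sequence, using that each cell is an affine space with a linear $S$-action having only nonzero weights — would also work, but is more laborious, and I would prefer to cite the general theorem.
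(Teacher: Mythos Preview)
Your proof is correct. The paper does not actually give a proof of this theorem: it simply states it as a consequence of ``the localization theorem for equivariant $K$-theory,'' with the ambient references being \cite{Kostant-Kumar} and \cite{Chriss-Ginzburg}. What you have written is precisely the standard way to unpack that implication --- Thomason/Chriss--Ginzburg localization gives an isomorphism after inverting a suitable multiplicative set in $R(S)$, and Pittie--Steinberg freeness of $R(T)$ over $R(T)^W$ (hence of $K^0_S(G/B)$ over $R(S)$) supplies the torsion-freeness needed to conclude injectivity before localization --- so your argument is exactly in the spirit of the paper's citation, only with the details filled in.
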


When $X^S$ is finite, as it is in our cases, Theorem~\ref{thm:k-theory-localization} says an 
equivariant $K$-theory class is
determined by its restrictions to the fixed points.  This will be our method to verify the correctness
of our formulas.

We also remark on the restriction maps to fixed points.  Recall that the variables $x_i$ in $K_S^0(G/B)$ represent the classes of standard torus-equivariant line bundles $\mathcal{L}_{X_i}$.  If $i_w$ denotes the inclusion of the fixed point $w=wB/B$ into $G/B$, then restriction at $w$ acts by permutation of the indices on the $\xx$-variables, i.e. $i_w^*(x_i) = e^{X_{w(i)}}$.  This is essentially because the full torus $T$ of $G$ acts on the fiber $(\caL_{X_i})_w$ with weight $w(X_i) = X_{w(i)}$.  (Note that this describes the $T$-equivariant restriction map.  Since we are working in $S$-equivariant $K$-theory, the permutation action must be followed by the restriction map $\rho$ defined in Section \ref{sec:conventions}.)

Recall also that the product structure on $K$-theory arises from tensor product of vector bundles, and that $\mathcal{L}_{\lambda} \otimes \mathcal{L}_{\mu} \cong \mathcal{L}_{\lambda + \mu}$.  Thus when restricting a product of $\xx$-variables, the associated characters add, i.e. \[i_w^*(x_ix_j) = e^{X_{w(i)} + X_{w(j)}}.\]  

\subsection{Proofs of Theorem \ref{thm:k-formula-sp} and Proposition \ref{prop:k-formula-o}}
\label{sec:k-theory-formula-proofs}
Since the map from $S$-equivariant $K$-theory to ordinary $K$-theory is to simply set all $\yy$-variables to $1$, and since the representatives described by Theorem \ref{thm:k-formula-sp} and Proposition \ref{prop:k-formula-o} do not use any $\yy$-variables, it is clear that we have only to verify that these representatives are correct $S$-equivariantly.

We adapt the arguments 
of  \cite{Wyser-13b} here to
equivariant $K$-theory, using the facts from Section \ref{sec:background-k-theory}.
We use Theorem \ref{thm:k-theory-localization}, combined with the following $K$-theoretic version of the self-intersection formula, due to R.W. Thomason \cite[Lemma 3.3]{Thomason}.  For brevity, we state only the particular consequence of 
Thomason's lemma we need:

\begin{Lemma}\label{lem:self-intersection}
Fix an algebraic torus $S$.  Let $X$ be a smooth $S$-variety, and let $j: Z \hookrightarrow X$ be an $S$-equivariant regular embedding of a smooth $S$-stable subvariety $Z$.  Let $\caN^*$ denote the conormal bundle to $Z$ in $X$, and use the shorthand 
\[ \lambda_{-1}(\caN^*) := \sum_{i=0}^{\infty} \left( -1 \right)^i \left[ \bigwedge^i \caN^* \right] \in K_0^S(Z). \]
Then in $K_0^S(Z)$, we have 
\[ j^* \circ j_*([\caO_Z]) = \lambda_{-1}(\caN^*). \]
\end{Lemma}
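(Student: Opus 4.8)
The plan is to represent the class $j_*[\caO_Z]\in K_0^S(X)$ by an explicit $S$-equivariant bounded complex of locally free sheaves and then pull that complex back along $j$, term by term. The point is that, since $X$ is smooth, the natural map $K_0^S(X)\to K^0_S(X)$ is an isomorphism, so $j_*[\caO_Z]$ is computed by any finite $S$-equivariant locally free resolution $\caF_\bullet\to j_*\caO_Z$ via $j_*[\caO_Z]=\sum_i(-1)^i[\caF_i]$; and because the pullback $j^*$ on $K^0_S$ is induced by honest pullback of vector bundles, it sends the class of such a complex to the class of its termwise pullback, giving
\[
j^*\bigl(j_*[\caO_Z]\bigr)=\sum_i(-1)^i\,[\,j^*\caF_i\,].
\]
So everything comes down to choosing $\caF_\bullet$ well and identifying $j^*\caF_i$.

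Since $j$ is a regular embedding, the right choice of $\caF_\bullet$ is a Koszul complex built on the conormal bundle. The cleanest way to see this globally and $S$-equivariantly is to reduce to the split case by the ($S$-equivariant) deformation to the normal cone: one may replace $X$ by the total space $\mathbf{N}$ of the normal bundle $\caN$ to $Z$, with bundle projection $p:\mathbf{N}\to Z$ and $j$ now the zero section, without changing the composite $j^*\circ j_*$. In this situation $j_*\caO_Z$ carries the global $S$-equivariant Koszul resolution
\[
0\to\bigwedge^r p^*\caN^*\to\cdots\to\bigwedge^2 p^*\caN^*\to p^*\caN^*\to\caO_{\mathbf N}\to j_*\caO_Z\to0,
\]
where $r=\operatorname{rank}\caN=\operatorname{codim}_X Z$ and the differentials are contraction against the tautological section of $p^*\caN$ that cuts out the zero section; exactness here is the standard exactness of the Koszul complex of a regular section.

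To finish, restrict along $j$. Because $j^*p^*=\id$ on sheaves, $j^*\bigl(\bigwedge^i p^*\caN^*\bigr)=\bigwedge^i\caN^*$, so the displayed formula for $j^*\circ j_*$ yields
\[
j^*\circ j_*\bigl([\caO_Z]\bigr)=\sum_{i=0}^{r}(-1)^i\Bigl[\bigwedge^i\caN^*\Bigr]=\lambda_{-1}(\caN^*)\qquad\text{in }K_0^S(Z),
\]
the sum being finite since $\caN^*$ has rank $r$. Equivalently, pulling the Koszul differentials back along the zero section makes them vanish, so $j^*$ of the resolution is a direct sum of shifted bundles $\bigwedge^i\caN^*$, whose $K$-class is precisely this alternating sum. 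I expect the only real obstacle to be the bookkeeping in the reduction to the split case --- verifying that the equivariant deformation to the normal cone is available in this setting and that the push-pull composite $j^*\circ j_*$ is unaffected by the specialization --- and for those technical points one can invoke \cite[Lemma~3.3]{Thomason}, which is exactly this statement.
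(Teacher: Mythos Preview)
Your argument is correct and is the standard route to the self-intersection formula in equivariant $K$-theory: resolve $j_*\caO_Z$ by the Koszul complex on the tautological section of $p^*\caN$ over the normal bundle (after reducing to that model via deformation to the normal cone), then pull back along the zero section to obtain $\sum_i (-1)^i[\bigwedge^i\caN^*]$. The only delicate points are exactly the ones you flag --- that the deformation to the normal cone can be carried out $S$-equivariantly and that $j^*\circ j_*$ is preserved under the specialization --- and these are handled in \cite[Lemma~3.3]{Thomason}.

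That said, the paper itself does not give a proof of this lemma at all. It is stated as a direct quotation of a special case of Thomason's result \cite[Lemma~3.3]{Thomason}, with no argument supplied. So there is nothing to compare: you have written out (correctly) the skeleton of the proof that the paper simply imports by citation. Your final sentence, invoking Thomason for the technical reduction, is in fact the entirety of what the paper does.
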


We describe Lemma~\ref{lem:self-intersection}'s use to 
compute the restriction of the $K$-class in question to an
$S$-fixed point.  Choose an $S$-fixed point $w:=wB/B$, and let $i$ be the inclusion of $w$ into the
closed orbit $\frX_{w_0}$.  Let $j$ denote the inclusion of $\frX_{w_0}$ into $G/B$, and let $k=j \circ i$ be the inclusion of $w$ into $G/B$.  Let $S(w)$
denote the multiset of weights of the $S$-action on $\caN:=\caN_{\frX_{w_0}}(G/B)|_w$, the normal bundle to  $\frX_{w_0}$ in $G/B$
restricted to the point $w$.

Then the restriction $[\caO_{\frX_{w_0}}]|_w$ of the class $[\caO_{\frX_{w_0}}] \in K_S^0(G/B)$ to $w$ is given by
\begin{equation}\label{eqn:compute-restriction}
 k^*(j_*([\caO_{\frX_{w_0}}])) = i^*(j^* \circ j_*([\caO_{\frX_{w_0}}])) = i^*(\lambda_{-1}(\caN^*))
 = \displaystyle\prod_{\chi \in S(w)} (1-e^{-\chi}).
\end{equation}

Note that the second of the string of equalities above uses Lemma \ref{lem:self-intersection}, taking $Z=\frX_{w_0}$ and $X=G/B$.  The last equality is evident, since the product on the right-hand side expands as an alternating sum of elementary symmetric polynomials in the weights $e^{-\chi}$.  It is clear from the definition of $\lambda_{-1}(\caN^*)$ (cf. Lemma \ref{lem:self-intersection}) that this alternating sum is what results when one restricts $\lambda_{-1}(\caN^*)$ to $w$.

Now, the weights in $S(w)$ can be computed, as $\caN|_w$ is simply the quotient of tangent spaces
$T_w(G/B)/T_w \frX_{w_0}$, considered as an $S$-module.  The tangent space $T_w(G/B)$ is well-understood, while
$T_w \frX_{w_0}$ is easily computable since $\frX_{w_0}$ is isomorphic to the flag variety for $K=Sp(2n,\C)$
\cite[Proposition 5]{Wyser-13b}.  As found in \cite[Proposition 6]{Wyser-13b}, we have
\[ S(w) = \{ \rho(w\Phi^+) \setminus \Phi_K \}, \]
where $\rho: R(T) \rightarrow R(S)$ is the restriction.  Note that this a multiset in general, since $\rho(w\Phi^+)$
can contain weights with multiplicity greater than $1$.

To verify the correctness of the formula of Theorem \ref{thm:k-formula-sp}, it remains to
check both
\begin{enumerate}
	\item[(A)] For any $S$-fixed point $w \in \frX_{w_0}$, the formula restricts at $w$ to give $\displaystyle\prod_{\chi \in S(w)} (1-e^{-\chi})$.
	\item[(B)] For any $S$-fixed point $w \notin \frX_{w_0}$, the formula restricts at $w$ to give $0$.
\end{enumerate}

As explained in Section \ref{sec:conventions}, $\frX_{w_0}$ contains precisely the $S$-fixed points
corresponding to mirrored permutations $w \in S_{2n}$.  We start by computing $S(w)$ for such a $w$.

Using \eqref{eqn:w-action-commutes-with-restriction}, we can compute the set $S(w)$ as follows:
Take the weights of $T_1(G/B)$, restrict them to $S$, and discard
(with multiplicity $1$) any which occur as a weight on $T_1 \frX_{w_0}$.  Then, apply $w$ (considered as a signed
permutation) to the resulting multiset of weights.

The weights on $T_1(G/B)$ correspond to the positive roots such that the roots of $B$ are
negative.  Since $B$ was chosen to be the upper-triangular Borel, these are $-X_i+X_j$ where $1 \leq i < j \leq 2n$.
Restricting these to $S$, we have the following weights:
\begin{itemize}
 \item $-Y_i \pm Y_j$ for $1 \leq i < j \leq n$ (each with multiplicity $2$);
 \item $-2Y_i$ for $1 \leq i \leq n$ (each with multiplicity $1$).
\end{itemize}

Discarding weights of $T_1 \frX_{w_0}$ (that is, roots of $K$) with multiplicity $1$, we are left only with 
weights of the form $-Y_i \pm Y_j$ ($1 \leq i < j \leq n$), each with multiplicity $1$.  The weights of $S(w)$
can be obtained from these weights by applying $w$, considered as a signed permutation. Using \eqref{eqn:compute-restriction}
together with \eqref{eqn:w-action-commutes-with-restriction} again, we see that 
\begin{equation}\label{eqn:explicit-restriction}
[\caO_{\frX_{w_0}}]|_w = \displaystyle\prod_{1 \leq i < j \leq n} (1-w(e^{Y_i + Y_j}))(1-w(e^{Y_i - Y_j})).
\end{equation}

We now show that $\Upsilon:=\Upsilon_{w_0,(GL_{2n},Sp_{2n})}^K$ is correct, by checking that both 
(A) and (B) hold.

Note that 
\[ \Upsilon = \displaystyle\prod_{1 \leq i < j \leq 2n-i} (1-x_ix_j) = \displaystyle\prod_{1 \leq i < j \leq n} (1-x_ix_j)(1-x_ix_{2n+1-j}). \]

Let $w \in \frX_{w_0}$ be a mirrored permutation.  Then using the latter expression for $\Upsilon$, we have
\begin{eqnarray}\nonumber
 \Upsilon|_w = \Upsilon(\rho(w \xx),\yy) & = & \prod_{1 \leq i < j \leq n} (1-\rho(e^{X_{w(i)}+X_{w(j)}}))(1-\rho(e^{X_{w(i)}+X_{w(2n+1-j)}}))\\ \nonumber
& = &\prod_{1 \leq i < j \leq n} (1-w(e^{Y_i+Y_j}))(1-w(e^{Y_i-Y_j})), \nonumber
\end{eqnarray}
using \eqref{eqn:w-action-commutes-with-restriction} once more.  As we saw above in \eqref{eqn:explicit-restriction}, this is precisely what $\Upsilon|_w$ is required to be.

Next, we show that if $w \notin \frX_{w_0}$, then
$\Upsilon|_w = 0$. 
If $w \notin \frX_{w_0}$, this means that $w$ is not a mirrored permutation.  Thus there is some smallest index $i$
such that $w(2n+1-i) \neq 2n+1-w(i)$.  Letting $j=2n+1-w(i)$, and letting $k=w^{-1}(j)$, it is clear that
$1 \leq i < k \leq 2n-i$, so that $1-x_ix_k$ divides $\Upsilon$.  Applying restriction at $w$ to this particular
factor gives
\[ 1-\rho(e^{X_{w(i)}})\rho(e^{X_{w(k)}}) = 1-\rho(e^{X_{w(i)}})\rho(e^{X_{2n+1-w(i)}}) = 1-e^{Y_l}e^{-Y_l} = 0, \]
for some $l$.  Thus $\Upsilon|_w = 0$, as required.

We conclude that $\Upsilon$ represents $[\caO_{\caY_{w_0}}]$.

Note that the above argument, with a very minor modification, also applies to prove the correctness of the formula of
Proposition \ref{prop:k-formula-o}.  Indeed, the only difference is that when discarding roots of $K$ from the 
(restricted) weights of $T_1(G/B)$, we no longer discard those of the form $-2Y_i$, since these are not roots in 
types $B$ or $D$, whereas they are in type $C$.  Thus in either type, one computes the restriction
$[\caO_{\caY_{w_0}}]|_w$ as follows:
\[ [\caO_{\caY_{w_0}}]|_w = \displaystyle\prod_{1 \leq i \leq \lfloor n/2 \rfloor} (1-w(e^{-2Y_i})) \displaystyle\prod_{1 \leq i < j \leq n} (1-w(e^{Y_i + Y_j}))(1-w(e^{Y_i - Y_j})). \]

The argument proceeds from there unchanged, with the additional factors of $1-x_i^2$ ($i=1,\hdots,\lfloor n/2 \rfloor$) present
in the polynomial $\Upsilon_{w_0; (GL_n,O_n)}^K$ providing the additional needed factors upon restriction.

This proves Proposition \ref{prop:k-formula-o}.\qed

\begin{Remark}\label{rmk:k-to-c}
In the conventions we are using (which match those of \cite{Knutson.Miller:annals}), cohomological formulas for a class can be derived from $K$-theoretic formulas for the same class by making the
substitution $\xx \mapsto {\bf 1 - x}, \yy \mapsto {\bf 1-y}$ and then taking the sum of the lowest degree terms (cf. \cite[Remark 2.3.5]{Knutson.Miller:annals}).  Note that the formula for $\Upsilon_{w_0; (GL_n,O_n)}$ given in \eqref{eqn:GOeven} is related
to the $K$-theoretic formula of Proposition \ref{prop:k-formula-o} precisely this way.  Thus 
our proof of the latter also proves the former.
\qed
\end{Remark}

When $n$ is even, the closed $O_n$-orbit on $GL_n/B$ has two components, each being a distinct closed $SO_n$-orbit.
In this case, it would be preferable to have a formula for the $S$-equivariant $K$-class of each component individually,
rather than simply a formula for the $K$-class of their union.  In equivariant cohomology, this is done in 
\cite[Proposition 10]{Wyser-13b}, but for $K$-theory, we have been unable to find a general formula.
\begin{Problem}
For even $n$, give explicit formulas for the ($S$-equivariant) $K$-theory classes of the (two) connected components of $\caY_{w_0}$.
\end{Problem}

\subsection{Proof of Theorem \ref{thm:well-defined-stable-k-theory}}
We now prove Theorem \ref{thm:well-defined-stable-k-theory} by indicating how to modify the proofs of Theorems
\ref{thm:self-consistent} and \ref{thm:stability} to apply to $K$-theory.

To prove that applying Demazure operators gives a well-defined family $\{\Upsilon_{\pi; (GL_{2n},Sp_{2n})}^K\}$
of polynomials, we simply replace Schubert polynomials $\Schub_w$ by Grothendieck polynomials $\Groth_w({\bf x})$ in the proof
of Theorem  \ref{thm:self-consistent}.
The remainder of the proof is \emph{mutatis mutandis}, although we wish to make a remark about conventions.
We are using the same conventions for (single) Grothendieck
polynomials as, e.g., \cite{Knutson.Miller:annals}. If we make the change of variables $x_i\mapsto 1-x_i$ we obtain
Grothendieck polynomials $\Groth_w({\bf 1-x})$ whose lead term is the Schubert polynomial ${\mathfrak S}_w({\bf x})$. Note that after this change of variables
\[\Upsilon_{w_0, (GL_{2n},Sp_{2n})}^K = \displaystyle\prod_{1 \leq i < j \leq 2n-i} (1 - x_ix_j)\mapsto 
 \displaystyle\prod_{1 \leq i < j \leq 2n-i} (x_i+x_j-x_i x_j).\]
Both the middle and latter expressions live in ${\caL}_{2n} \subseteq {\mathbb Q}[x_1,\ldots,x_{2n}]$.
In particular, the latter can expressed
as a linear combination of the polynomials $\Groth_w({\bf 1-x})$ for $w\in S_{2n}$. Now we obtain an expression for 
$\Upsilon_{w_0, (GL_{2n},Sp_{2n})}^K$ in terms of the $\Groth_{w}({\bf x})$ by changing variables back.

The proof of the stability of the family $\{\Upsilon_{\pi; (GL_{2n},Sp_{2n})}^K\}$ is almost identical to the proof
of Theorem \ref{thm:stability}.  One shows by induction that \[(D_{2N-2} \circ D_{2N-3} \circ \hdots
\circ D_1)(\Upsilon_{w_0; (GL_{2N},Sp_{2N})}^K) = \Upsilon_{w_0; (GL_{2(N-1)},Sp_{2(N-1)})}^K.\] 
 The first $N-1$
Demazure operators applied strip off the factors $1-x_ix_j$ with $i+j=2N$ in the order $1-x_1x_{2N-1},\hdots,
1-x_{N-1}x_{N+1}$.  The next $N-1$ Demazure operators strip off the factors $1-x_ix_j$ with $i+j=2N-1$ in the order
$1-x_{N-1}x_{N},1-x_{N-2}x_{N+1},\hdots,1-x_1x_{2N-2}$.  This can easily be proved by induction, exactly as in the proof
of Theorem \ref{thm:stability}, using the fact that $D_i(f) = -\partial_i(x_{i+1}f)$.  We omit the details.
\qed

We remark that in ordinary $K$-theory \cite[Theorem~1]{Brion-02} implies that $\Upsilon_{\pi; (GL_{2n},Sp_{2n})}^K$
expands as an alternating sum of Grothendieck polynomials. (Here were have used that the orbit closures
in this case have rational singularities.)  Since the monomials of the Grothendieck polynomials ${\mathfrak G}_w({\bf 1-x})$ 
also alternate in sign
by degree, the above argument allows one to conclude an alternation-in-sign of the monomial expansion 
of the polynomial $\Upsilon_{\pi; (GL_{2n},Sp_{2n})}^K({\bf 1- x})$, which is the $K$-theory representative, up to a 
change of convention.

\section{Final remarks}\label{sec:final-remarks}
\label{sec:expansion-double-schub}
We refer the reader to \cite[Section~2.3]{Manivel} for definitions of double Schubert polynomials ${\mathfrak S}_w$.
In brief, it is standard that any polynomial $f\in{\mathbb Z}[x_1,x_2,\ldots; y_1,y_2,\ldots]$ 
can be expressed as a ${\mathbb Z}[y_1,y_2,\ldots]$-linear combination of ${\mathfrak S}_w$ for $w\in S_{\infty}$.
Now consider the following expansion
\begin{eqnarray}\label{eqn:o3-schubert}
\Upsilon_{w_0; (GL_3,O_3)}& = & 2 x_1 (x_1 + x_2)\\ \nonumber
& = & (2y_1^2+2y_1 y_2) {\mathfrak S}_{123}+2(y_1+y_2){\mathfrak S}_{213}+2y_1{\mathfrak S}_{132}+2{\mathfrak S}_{231}+2{\mathfrak S}_{312}.\nonumber
\end{eqnarray}
This is an example of the following:
\begin{Corollary}[of Theorem~\ref{thm:self-consistent}]
\label{cor:double}
The $\Upsilon$ polynomials are a (unique) linear combination of double Schubert polynomials ${\mathfrak S}_{w}(\xx;\yy)$ with $w\in S_{n}$. The coefficients are in ${\mathbb Z}_{\geq 0}[y_1,\ldots,y_n]$.
\end{Corollary}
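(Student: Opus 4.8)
The plan is to bootstrap from Theorem~\ref{thm:self-consistent} via the classical expansion of single Schubert polynomials in double ones. By Theorem~\ref{thm:self-consistent}, each $\Upsilon$-polynomial can be written as $\sum_{w}c_w\,\mathfrak S_w(\mathbf x)$ with $c_w\in\mathbb Z_{\ge 0}$ and $w$ ranging over $S_n$ (respectively $S_{2n}$ in the symplectic case). So it suffices to prove two things: first, that every single Schubert polynomial $\mathfrak S_w(\mathbf x)$ with $w\in S_n$ is a $\mathbb Z_{\ge 0}[y_1,\dots,y_n]$-linear combination of double Schubert polynomials $\mathfrak S_u(\mathbf x;\mathbf y)$ with $u\in S_n$; and second, that such an expansion of the $\Upsilon$-polynomial is unique.

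For the first point I would use the standard ``third alphabet'' identity for double Schubert polynomials (see \cite{Manivel} or Macdonald's notes), $\mathfrak S_w(\mathbf x;\mathbf z)=\sum \mathfrak S_u(\mathbf x;\mathbf y)\,\mathfrak S_{wu^{-1}}(\mathbf y;\mathbf z)$ over $u$ with $\ell(wu^{-1})+\ell(u)=\ell(w)$. Setting $\mathbf z=\mathbf 0$ and using $\mathfrak S_v(\mathbf x;\mathbf 0)=\mathfrak S_v(\mathbf x)$ yields
\[
\mathfrak S_w(\mathbf x)\;=\;\sum_{\substack{u\in S_n\\ \ell(wu^{-1})+\ell(u)=\ell(w)}}\mathfrak S_{wu^{-1}}(\mathbf y)\,\mathfrak S_u(\mathbf x;\mathbf y).
\]
Here $wu^{-1}$ and $u$ lie in $S_n$, so each coefficient $\mathfrak S_{wu^{-1}}(\mathbf y)$ is a single Schubert polynomial indexed by an element of $S_n$; by Lascoux--Sch\"utzenberger \cite{Lascoux.Schutzenberger} it has nonnegative integer coefficients and involves only $y_1,\dots,y_{n-1}$, hence lies in $\mathbb Z_{\ge 0}[y_1,\dots,y_n]$. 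Substituting this into $\sum_w c_w\,\mathfrak S_w(\mathbf x)$ and collecting terms produces an expansion $\Upsilon_{\pi}=\sum_{u\in S_n}d_u(\mathbf y)\,\mathfrak S_u(\mathbf x;\mathbf y)$ with every $d_u\in\mathbb Z_{\ge 0}[y_1,\dots,y_n]$; for the closed orbit of $(GL_3,O_3)$ this recovers \eqref{eqn:o3-schubert}.

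For uniqueness, the family $\{\mathfrak S_u(\mathbf x;\mathbf y):u\in S_n\}$ is $\mathbb Z[y_1,\dots,y_n]$-linearly independent and spans the free $\mathbb Z[\mathbf y]$-module on the monomials $x^\alpha$ with $\alpha_i\le n-i$: the transition matrix to that monomial basis is triangular with unit diagonal for the code order, hence invertible over $\mathbb Z[\mathbf y]$. Since $\Upsilon_{\pi}\in\mathcal L_n$ by Theorem~\ref{thm:self-consistent}, it lies in this module, so the expansion above is its unique $\mathbb Z[\mathbf y]$-linear expansion in double Schubert polynomials and automatically involves only indices in $S_n$.

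The only ingredient beyond bookkeeping is the first point, and it is classical; the step that requires real care is making the indexing and sign conventions for $\mathfrak S_w(\mathbf x;\mathbf y)$ used here (those of \cite{Manivel}, with $\mathfrak S_{w_0}(\mathbf x;\mathbf y)=\prod_{i+j\le n}(x_i-y_j)$) agree with those in whatever reference one cites for the coproduct identity, so that the $\mathbf y$-coefficients genuinely emerge as single Schubert polynomials in $\mathbf y$ rather than in $-\mathbf y$. A geometric alternative would be M.~Brion's equivariant combinatorial formula \cite{Brion-98}, but its positivity is phrased in the negative simple roots, which one would then have to translate back; the coproduct argument avoids this.
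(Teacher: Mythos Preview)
Your proof is correct and follows essentially the same route as the paper: both invoke Theorem~\ref{thm:self-consistent} and then apply the Kirillov-type identity $\mathfrak S_w(\mathbf x)=\sum_{uv=w,\,\ell(u)+\ell(v)=\ell(w)}\mathfrak S_u(\mathbf y)\,\mathfrak S_v(\mathbf x;\mathbf y)$ (the paper cites \cite[Corollary~1]{BKTY}), which is exactly your third-alphabet identity specialized at $\mathbf z=\mathbf 0$. Your explicit treatment of uniqueness via the triangular basis argument is a welcome addition that the paper leaves implicit.
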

\begin{proof}
In fact, any (single) Schubert polynomial ${\mathfrak S}_w(\xx)$ is a linear
combination of double Schubert polynomials ${\mathfrak S}_{v}(\xx;\yy)$ of the desired sort. More precisely, we have
\[{\mathfrak S}_{w}(\xx)=\sum_{u\cdot v=w}{\mathfrak S}_{u}(\yy) {\mathfrak S}_{v}(\xx;\yy),\]
where $\ell(u)+\ell(v)=w$. This identity follows from a formula of A.~N.~Kirillov; for a proof see \cite[Corollary~1]{BKTY}.
Finally, the corollary itself holds since each $\Upsilon$-polynomial is a nonnegative sum of single Schubert
polynomials, by Theorem~\ref{thm:self-consistent}.
\end{proof}
Thus, any formula for the expansion for the $\Upsilon$-polynomials
in terms of single Schubert polynomials implies an expansion
formula in terms of double Schubert polynomials. 

M.~Brion \cite[Theorem 1.5]{Brion-98} expresses an
ordinary (non-equivariant) cohomology class of any $K$-orbit closure as an explicit weighted sum of Schubert classes.  
The formula
is in terms of a sum over paths in the weak order graph, each weighted by a certain power of $2$.  
So in principle, 
polynomial representatives for the ordinary cohomology classes of the closed orbits in our two cases
were already known, since the weak order graphs are well-understood in these cases.  One simply replaces the Schubert classes from Brion's formula by the corresponding Schubert polynomials, weighted by the corresponding multiplicities.

Our proof of Theorem \ref{thm:self-consistent} makes clear that the polynomials obtained in this way are in fact equal to our $\Upsilon$-polynomials.  Even so, the form in which we give the representatives for $\Upsilon_{w_0; (G,K)}$ (for either pair $(G,K)$) is not immediate from Brion's result, since it is not obvious that the sum of Schubert polynomials in question factors in the form
in which we present it.

Note that Brion's formula does not apply equivariantly, so our equivariant representatives cannot be deduced from it.
This is evident, for example, in the multiplicity-free case, where Brion exhibits a flat degeneration of a
multiplicity-free orbit closure to a union of Schubert varieties which is visibly not equivariant for the
torus action.  We can make this more explicit using the double Schubert expansion formula appearing in the proof of
Corollary \ref{cor:double}.  For example, for the closed $Sp_4$-orbit $\frX_{4321}$ on $GL_4/B$, we see that
\begin{eqnarray}\nonumber
\Upsilon_{w_0; (GL_4,Sp_4)}& = & (x_1+x_2)(x_1+x_3)\\ \nonumber
& = & (y_1^2+y_1y_2+y_1y_3+y_2y_3) {\mathfrak S}_{1234}(\xx;\yy)+(y_1+y_2){\mathfrak S}_{2134}(\xx;\yy)+ \\ \nonumber
& &
(y_1+y_2){\mathfrak S}_{1243}(\xx;\yy)+{\mathfrak S}_{1342}(\xx;\yy)+{\mathfrak S}_{3124}(\xx;\yy).\nonumber
\end{eqnarray}

So the $T$-equivariant class represented by $\Upsilon_{w_0; (GL_4,Sp_4)}$ is given in the Schubert basis by
\[ (y_1^2+y_1y_2+y_1y_3+y_2y_3) [X^{1234}]_T + (y_1+y_2) [X^{2134}]_T + (y_1+y_2) [X^{1243}]_T + [X^{1342}]_T + [X^{3124}]_T. \]

Restricting to $S$-equivariant cohomology (which corresponds to setting $y_3 = -y_2$ and $y_4 = -y_1$), we see that
\[
 [\frX_{4321}]_S = (y_1^2-y_2^2)[X^{1234}]+ (y_1+y_2)[X^{2134}]_S + (y_1+y_2)[X^{1243}]_S + [X^{1342}]_S + [X^{3124}]_S. \]

Similarly, using \eqref{eqn:o3-schubert} above, we see that the closed orbit $O_3$-orbit $\caY_{321}$ on $GL_3/B$ has
$S$-equivariant class
\[
 [\caY_{321}]_S = 2y_1^2[X^{123}]_S + 2y_1[X^{213}]_S + 2y_1[X^{132}]_S + 2[X^{231}]_S + 2[X^{312}]_S.\]

\section*{Acknowledgements}
We thank Bill Graham for informing us of the reference \cite{Thomason}.  We also thank Michel Brion for many helpful conversations about the technicalities of equivariant $K$-theory and Demazure operators.
AY was supported by NSF grant DMS 1201595 as well as the Helen Corley Petit
endowment at UIUC.  BW was supported by NSF International Research Fellowship 1159045 and
hosted by Institut Fourier in Grenoble.

\end{document}